\renewcommand{\theequation}{\arabic{section}.\arabic{equation}}
\def\vbar{\mathchoice{\vrule height6.3ptdepth-.5ptwidth.8pt\kern- .8pt}
{\vrule height6.3ptdepth-.5ptwidth.8pt\kern-.8pt} {\vrule
height4.1ptdepth-.35ptwidth.6pt\kern-.6pt} {\vrule
height3.1ptdepth-.25ptwidth.5pt\kern-.5pt}}
\def\<{\langle}
\def\>{\rangle}
\def\a{\alpha}
\def\b{\beta}
\newtheorem{df}{Definition}[section]
\newtheorem{thm}{Theorem}[section]
\newtheorem{cor}{Corollary}[section]
\newtheorem{rem}{Remark}[section]
\newtheorem{prop}{Proposition}[section]
\newtheorem{exa}{Example}[section]
\newtheorem{lem}{Lemma}[section]
\date{}
\begin{document}

\title{Hom-Jordan-Malcev-Poisson algebras}
\author{
{ Taoufik  Chtioui$^{1}$
 \footnote {  E-mail: chtioui.taoufik@yahoo.fr}
, Sami  Mabrouk$^{2}$
 \footnote {  E-mail: mabrouksami00@yahoo.fr}
, Abdenacer  Makhlouf$^{3}$
 \footnote {  E-mail: Abdenacer.Makhlouf@uha.fr}
}\\
{\small 1.  University of Sfax, Faculty of Sciences Sfax,  BP
1171, 3038 Sfax, Tunisia} \\
{\small 2.  University of Gafsa, Faculty of Sciences Gafsa, 2112 Gafsa, Tunisia}\\
{\small 3.~ Universit\'e de Haute Alsace, IRIMAS - D\'epartement de Math\'ematiques,
F-68093 Mulhouse, France}}
 \maketitle
\begin{abstract}
The purpose of this paper is to provide and study a Hom-type generalization of Jordan-Malcev-Poisson algebras, called  Hom-Jordan-Malcev-Poisson algebras.  We show that they are closed under twisting by suitable self-maps and   give a characterization of admissible Hom-Jordan-Malcev-Poisson algebras. In addition, we introduce the notion of pseudo-Euclidian Hom-Jordan-Malcev-Poisson algebras and describe its $T^*$-extension. Finally, we generalize the notion of Lie-Jordan-Poisson triple system to the Hom setting and establish its relationships with Hom-Jordan-Malcev-Poisson algebras.
\end{abstract}
{\bf Key words}: Hom-JMP-algebra, Hom-Malcev algebra, Hom-Jordan algebra,  Hom-flexible algebras, admissible Hom-JMP algebra, Pseudo-Euclidian Hom-JMP algebra, Hom-Lie-Jordan-Poisson triple system.
 \normalsize\vskip0.5 cm

\section*{Introduction}
\renewcommand{\theequation}{\thesection.\arabic{equation}}
Non-associative algebras become important research field due to their importance  in  various problems
related to  physics and other branches of mathematics.   The first instances of
non-associative Hom-algebras appeared in the study of  quasi-deformations of Lie algebras of
vector fields.
Hom-Lie algebras were first introduced by Hartwig, Larsson and Silvestrov in
order to describe $q$-deformations of Witt and Virasoro algebras using $\sigma$-derivations
(see \cite{hls} ). The corresponding associative type objects, called Hom-associative
algebras were introduced by Makhlouf and Silvestrov in \cite{ms}. Hom-alternative, Hom-Jordan  and Hom-flexible  algebras  were introduced first in \cite{makh} and then considered as sa well as
 Hom-Malcev algebras  in \cite{yau}.

Poisson algebras form an important class of non-associative algebras. They are used in many fields of mathematics and physics.  They play a fundamental  role in Poisson geometry, quantum groups, deformation theory, Hamiltonian mechanics and topological field theories.
Poisson algebras are generalized in many ways. If we omit the commutativity of the associative structure, we get the class of non-commutative Poisson algebras.
Another way to generalize this class is to replace the  associative structure by a Jordan product and the Lie-bracket by a Malcev one. Hence, we obtain a new class of algebras  called Jordan-Malcev-Poisson algebras (JMP-algebras)  which  are defined by a  triple $A$  $(A,[,],\circ)$ consisting of a linear equipped with a Malcev bracket and a Jordan structure  satisfying the Leibniz rule:
$$[x,y\circ z]=[x,y]\circ z+y \circ [x,z].$$
They were introduced by Ait BenHaddou, Benayadi and Boulmane  in \cite{benayadi}. Such algebras can be described  in terms of a single bilinear operation, called admissible JMP algebras. This class contains alternative algebras.
In the particular case where $(A,\circ)$ is associative commutative, $(A,[,],\circ)$ becomes a Malcev Poisson algebra, a concept  introduced first   by Shestakov in \cite{shestakov}.

The purpose of this paper is to study a twisted generalization of JMP algebras, called Hom-JMP algebras and some other algebraic structures (admissible Hom-JMP algebras).  Next,  we introduce and study pseudo-Euclidian Hom-JMP algebras, which are Hom-JMP algebras endowed with symmetric invariant non-degenerate bilinear forms.
We provide a twist construction and extend the $T^*$-extension theory to this class of non-associative Hom-algebras.
We also construct a generalized Hom-triple systems called Hom-Lie-Jordan-Poisson triple systems from  admissible Hom-JMP algebras.

This paper is organized as follows:  In section 1, we summarize the definitions and some key constructions of Hom-JMP algebras. In section 2, we study and highlight relationships between Hom-JMP algebras and admissible Hom-JMP algebras.  In addition, it is shown that admissible Hom-JMP algebras are power Hom-associative.
In Section 3,  we introduce the notion of pseudo-Euclidian Hom-JMP algebras and describe its $T^*$-extension.
Section 4 is devoted to the study of  a Hom version of Lie-Jordan-Poisson triple system algebras and provide its connection with admissible Hom-JMP algebras.

\section{Definitions and Preliminary Results}
\label{sec:basic}
\subsection{Basic definitions}
In this section, we introduce Hom-Jordan-Malcev-Poisson algebras (Hom-JMP algebras) as a generalization of both Hom-Poisson algebras, Malcev-Poisson algebras and Jordan-Malcev-Poisson algebras (JMP algebras). We show  that Hom-JMP algebras are closed under suitable twisting by weak morphisms.\\
Let us begin with the basic definitions regarding Hom-algebras.
We work over a fixed commutative field $\mathbb{K}$ of characteristic $0$.

\begin{df}
\label{def:homass}
Let $(A,\mu,\alpha)$ be a Hom-algebra.
\begin{enumerate}
\item
The Hom-associator $as_A \colon A^{\otimes 3} \to A$ is defined as
\begin{equation}
\label{homassociator}
as_A (x,y,z)= \mu (\mu(x,y),\a(z))-\mu(\a(x),\mu(y,z)).
\end{equation}
\item
The Hom-algebra $A$ is called a Hom-Jordan algebra if it is commutative and satisfies the Hom-Jordan identity
\begin{equation}
\label{homassociativity}
as_A(x^2,\a(y),\a(x))=0.
\end{equation}
\item
The Hom-Jacobiator $J_A \colon A^{\otimes 3} \to A$ is defined as
\begin{equation}
\label{homjacobian}
J_A(x,y,z) =\circlearrowleft_{x,y,z} \mu (\mu(x,y),\a(z)),
\end{equation} where $\circlearrowleft_{x,y,z}$ denotes the cyclic summation over $x,y,z$.
\item
A Hom-Malcev algebra is a Hom-algebra $(A,[,],\alpha)$ such that $[,]$ is skewsymmetric and that the Hom-Malcev identity
\begin{equation}
\label{homMalcevid}
J_A(\alpha(x),\alpha(y),[x,z]) = [J_A(x,y,z),\alpha^2(x)]
\end{equation}
is satisfied for all $x,y,z \in A$.
\item A Hom-flexible algebra is a Hom-algebra $(A,\mu,\a)$ satisfying:
$$as_A(x,y,z)+as_A(z,y,x)=0,$$
or equivalently, $as_A(x,y,x)=0$, for all $x,y,z  \in A$.
\item A Hom-alternative algebra is a Hom-algebra $(A,\mu,\a)$ satisfying
\begin{align*}
as_A(x,y,z)+as_A(y,x,z)=as_A(x,y,z)+as_A(x,z,y)=0,
\end{align*}
or equivalently, $as_A(x,x,y)=as_A(x,y,y)=0$, for all $x,y,z \in A$.
\end{enumerate}
\end{df}
Note that any Hom-alternative algebra is Hom-flexible.\\
Let $(A,\mu,\a)$ be a Hom-algebra. Define the cyclic Hom-associator $S_A$ by:
$$S_A(x,y,z)=\circlearrowleft_{x,y,z}as_A(x,y,z).$$

Let us recall the definition of a JMP algebra \cite{kubo}.
\begin{df}
\label{def:noncommpoisson}
A JMP algebra $(A,\{,\},\circ)$ consists of
a Malcev algebra $(A,\{,\})$ and
a Jordan  algebra $(A,\circ)$
such that the Leibniz identity
\[
\{x,y\circ z\} = \{x,y\}\circ z + y\circ\{x,z\}
\]
is satisfied for all $x,y,z \in A$.
\end{df}
In a JMP algebra $(A,\{,\},\circ)$, the bracket $\{,\}$ is called the Poisson bracket, and $\circ$ is called the Jordan product.  The Leibniz identity says that $\{x,-\}$ is a derivation with respect to the Jordan product.

Hom-Poisson algebras were first introduced in \cite{ms2} by Makhlouf and Silvestrov.  We now define the Hom-type generalization of a JMP algebra.
\begin{df}
\label{def:hompoisson}
A Hom-JMP algebra $(A,\{,\},\circ,\alpha)$ consists of
a Hom-Malcev algebra $(A,\{,\},\alpha)$ and
a Hom-Jordan algebra $(A,\circ,\alpha)$
such that the Hom-Leibniz identity
\begin{equation}
\label{homleibniz}
\{\alpha(x),y\circ z\} = \{x,y\}\circ \alpha(z) + \alpha(y)\circ\{x,z\}
\end{equation}
is satisfied for any $x,y,z\in A$.
\end{df}
In a  Hom-JMP algebra $(A,\{,\},\circ,\alpha)$, the operations $\{,\}$ and $\circ$ are called the Hom-Poisson bracket and the Hom-Jordan product, respectively.
By the skewsymmetry of the Hom-Poisson bracket $\{,\}$, the Hom-Leibniz identity is equivalent to
\begin{equation}
\label{homleibniz'}
\{x\circ y,\alpha(z)\} = \{x,z\}\circ\alpha(y) + \alpha(x)\circ\{y,z\}.
\end{equation}

A JMP algebra is exactly a Hom-JMP algebra with identity twisting map.

Let $(A,\{,\}_A,\circ_A,\a_A)$ and $(B,\{,\}_B,\circ_B,\a_B)$ be two Hom-JMP algebras.  A weak morphism $f: A \to B$ is a linear map such that:
 $$f \{,\}_A=\{,\}_Bf^{\otimes 2}\ \ \textrm{and}\ \ f \circ_A=\circ_B f^{\otimes 2}.$$
A morphism $f: A \to B$ is a weak morphism such that $f \a_A=\a_B f$.

Note that a quadruple $(A,\{,\},\circ,\a)$ is said multiplicative if and only if the twisting map $\alpha: A \to A$ is a morphism.

The following result says that Hom-JMP algebras are closed under twisting by weak self-morphisms.

\begin{thm}
\label{thm:twist}
Let $(A,\{,\},\circ,\a)$ be a Hom-JMP algebra and $\b \colon A \to A$ be a weak morphism.  Then
$$A_\b = (A,\{,\}_\b = \b\{,\},\circ_\b = \b\circ,\b \a)$$
is also a Hom-JMP algebra.  Moreover, if $A$ is multiplicative and $\beta$ is a morphism, then $A_\beta$ is a multiplicative  Hom-JMP algebra.
\end{thm}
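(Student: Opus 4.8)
The plan is to verify that the twisted quadruple $A_\beta$ satisfies all the defining axioms of a Hom-JMP algebra, namely that $(A,\{,\}_\beta,\beta\alpha)$ is a Hom-Malcev algebra, that $(A,\circ_\beta,\beta\alpha)$ is a Hom-Jordan algebra, and that the Hom-Leibniz identity \eqref{homleibniz} holds for the twisted operations and twisted structure map. The key observation that makes everything go through is that $\beta$ is a weak morphism, so it commutes with both $\{,\}$ and $\circ$ when applied after them; this lets me pull factors of $\beta$ out of nested bracket expressions and reduce each twisted identity to a $\beta$-image of the corresponding untwisted identity, which holds by hypothesis.

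First I would treat the Hom-Malcev part. Both the Hom-Jacobiator $J_A$ and the Hom-Malcev identity \eqref{homMalcevid} are built from iterated applications of $\{,\}$ with $\alpha$ inserted in the appropriate slots. The new twisting map is $\beta\alpha$, and each application of the new bracket $\{,\}_\beta = \beta\{,\}$ carries an extra $\beta$. Because $\beta$ is a weak morphism, I can commute these extra $\beta$'s past the inner brackets so that the Jacobiator of the twisted bracket equals $\beta^2$ applied to $J_A$ with $\beta$'s absorbed into the arguments; concretely one shows $J_{A_\beta}(x,y,z) = \beta^2 J_A(x,y,z)$ after accounting for the twisting. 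Substituting into the Malcev identity and using that $\beta$ commutes with $\alpha$-insertions reduces the whole thing to $\beta^3$ (or $\beta^2$, depending on bookkeeping) applied to \eqref{homMalcevid} for $A$, which vanishes. The skewsymmetry of $\{,\}_\beta$ is immediate since $\beta$ is linear and $\{,\}$ is skewsymmetric.

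Next I would handle the Hom-Jordan part in parallel fashion. Commutativity of $\circ_\beta$ follows at once from commutativity of $\circ$. For the Hom-Jordan identity \eqref{homassociativity} one must check $as_{A_\beta}((x)_\beta^2,\beta\alpha(y),\beta\alpha(x))=0$, where the squaring and associator are now formed with $\circ_\beta$ and $\beta\alpha$. The same weak-morphism commutation shows that the twisted associator satisfies $as_{A_\beta}(x,y,z) = \beta^2 as_A(x,y,z)$, so the twisted Jordan identity becomes $\beta^k$ applied to \eqref{homassociativity} for $A$, which is zero. Finally, for the Hom-Leibniz identity I would expand $\{\beta\alpha(x), y\circ_\beta z\}_\beta$ and $\{x,y\}_\beta \circ_\beta \beta\alpha(z) + \beta\alpha(y)\circ_\beta\{x,z\}_\beta$, pull all the $\beta$'s outside using the weak-morphism property, and see that both sides equal $\beta^2$ (up to bookkeeping) applied to the respective sides of \eqref{homleibniz} for $A$; equality then follows from the Hom-Leibniz identity on $A$.

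The main obstacle is purely bookkeeping: one must carefully track how many factors of $\beta$ accumulate in each term and where the $\alpha$'s sit, making sure that the weak-morphism property $\beta\{u,v\}=\{\beta u,\beta v\}$ and $\beta(u\circ v)=\beta u\circ\beta v$ is invoked enough times to homogenize the powers of $\beta$ across all summands in a given identity. Once the powers match, each identity factors as $\beta^{\,n}$ applied to an identity known to hold in $A$. For the multiplicativity claim, if $\alpha$ is multiplicative and $\beta$ is a morphism (so $\beta\alpha=\alpha\beta$ and $\beta$ commutes with both products), then the new twisting map $\beta\alpha$ is a morphism for $\{,\}_\beta$ and $\circ_\beta$, which is verified by a direct short computation using that both $\alpha$ and $\beta$ are morphisms.
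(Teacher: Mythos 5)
Your verification of the Hom-Leibniz identity is correct and is essentially the paper's own computation: in \eqref{homleibniz} the twisting map occurs only to the first power and only on single variables, so applying $\beta^{2}$ and using the weak-morphism property term by term yields exactly the twisted identity. Your formulas $J_{A_\beta}=\beta^{2}J_{A}$ and $as_{A_\beta}=\beta^{2}as_{A}$ are also correct. The gap lies in the next step of your Hom-Malcev and Hom-Jordan arguments, where you invoke ``that $\beta$ commutes with $\alpha$-insertions.'' A weak morphism satisfies only $\beta\{x,y\}=\{\beta(x),\beta(y)\}$ and $\beta(x\circ y)=\beta(x)\circ\beta(y)$; no compatibility between $\beta$ and $\alpha$ is assumed, so in general $\beta\alpha\neq\alpha\beta$. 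This is fatal here because \eqref{homMalcevid} and \eqref{homassociativity}, unlike \eqref{homleibniz}, involve the twisting map applied to composite expressions and iterated: after twisting one meets
\begin{align*}
(\beta\alpha)\bigl(\{x,z\}_{\beta}\bigr)=\beta\,\alpha\beta\{x,z\},
\qquad
(\beta\alpha)^{2}(x)=\beta\,\alpha\beta\alpha(x),
\end{align*}
and the inner compositions $\alpha\beta$ cannot be rewritten as $\beta\alpha$. Concretely, after extracting all possible $\beta$'s, the twisted Malcev identity reads
\begin{align*}
\beta^{2}\,J_{A}\bigl(\beta\alpha(x),\beta\alpha(y),\beta\{x,z\}\bigr)
=\beta^{2}\bigl\{\beta J_{A}(x,y,z),\,\alpha\beta\alpha(x)\bigr\},
\end{align*}
and the twisted Jordan identity reads $\beta^{2}\,as_{A}\bigl((\beta x)^{2},\beta\alpha(y),\beta\alpha(x)\bigr)=0$; neither follows from \eqref{homMalcevid} or \eqref{homassociativity} for $A$ unless $\alpha\beta=\beta\alpha$, i.e.\ unless $\beta$ is a morphism. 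So your computation proves the ``moreover'' clause, but not the main statement, which is hypothesized for weak morphisms.

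The paper avoids precisely this trap: it does not re-derive those two structures, but quotes Yau's twisting theorems from \cite{yau}, which assert that $(A,\{,\}_{\beta},\beta\alpha)$ is Hom-Malcev and $(A,\circ_{\beta},\beta\alpha)$ is Hom-Jordan for a weak morphism $\beta$; it then carries out the direct $\beta^{2}$-computation only for the Hom-Leibniz identity, the one axiom for which that computation is legitimate without any commutation between $\beta$ and $\alpha$. To repair your proof, either cite \cite{yau} for the Malcev and Jordan parts as the paper does, or give an argument for those two identities that genuinely works for weak morphisms rather than the ``commute $\beta$ past $\alpha$'' shortcut.
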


\begin{proof}
In \cite{yau} the author  proved that $(A,\{,\}_\b,\b \a)$ is a Hom-Malcev algebra and $(A,\circ_\b,\b \a)$ is Hom-Jordan algebra.\\
It remains to show the  Hom-Leibniz identity.  Let $x,y,z \in A$, we know that
 \begin{align*}
    \{\alpha(x),y\circ z\} = \{x,y\}\circ \alpha(z) + \alpha(y)\circ\{x,z\}.
 \end{align*}
 Now applying $\b^2$ to the previous identity, we obtain
  \begin{align*}
    \{\b^2\alpha(x),\b^2(y)\circ \b^2(z)\} = \{\b^2(x),\b^2(y)\}\circ \b^2\alpha(z) + \b^2 \alpha(y)\circ\{\b^2(x),\b^2(z)\}.
 \end{align*}
 That is
  \begin{align*}
    \{\b\alpha(x),y\circ_\b z\}_\b = \{x,y\}_\b\circ_\b \b\alpha(z) + \b\alpha(y)\circ_\b\{x,z\}_\b.
 \end{align*}
 Therefore, $A_\b$ is a Hom-JMP algebra.
\end{proof}

 \begin{cor}\label{coro twist}
 Let $(A,\{,\},\circ)$ be a JMP algebra and $\a: A \rightarrow A$ be  a JMP morphism. Then \\
$(A,\{,\}_\a = \a\{,\},\circ_\a = \a \; \circ, \a)$ is a multiplicative  Hom-JMP algebra.
 \end{cor}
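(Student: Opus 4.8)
The plan is to recognize this statement as an immediate specialization of Theorem~\ref{thm:twist} to the situation where the ambient Hom-JMP algebra carries the identity twisting map. The whole content is a reduction, so no serious calculation should be required.

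First I would invoke the observation already recorded in the excerpt, namely that a JMP algebra $(A,\{,\},\circ)$ is precisely a Hom-JMP algebra whose twisting map is the identity; that is, $(A,\{,\},\circ,\mathrm{id}_A)$ is a Hom-JMP algebra. Since $\mathrm{id}_A$ trivially commutes with both the Poisson bracket $\{,\}$ and the Jordan product $\circ$, the twisting map $\mathrm{id}_A$ is a morphism, and hence this Hom-JMP algebra is multiplicative in the sense defined above.

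Next I would verify that the JMP morphism $\alpha$ qualifies as a morphism of the Hom-JMP algebra $(A,\{,\},\circ,\mathrm{id}_A)$. Being a JMP morphism, $\alpha$ satisfies $\alpha\{,\}=\{,\}\alpha^{\otimes 2}$ and $\alpha\circ=\circ\,\alpha^{\otimes 2}$, so it is certainly a weak morphism. The remaining compatibility with the twisting map reads $\alpha\,\mathrm{id}_A=\mathrm{id}_A\,\alpha$ (composition), which holds vacuously; thus $\alpha$ is in fact a morphism, not merely a weak one.

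Finally I would apply Theorem~\ref{thm:twist} with $\beta=\alpha$ to the base algebra $(A,\{,\},\circ,\mathrm{id}_A)$. The theorem produces the Hom-JMP algebra $A_\alpha=(A,\,\alpha\{,\},\,\alpha\circ,\,\alpha\,\mathrm{id}_A)$, and since $\alpha\,\mathrm{id}_A=\alpha$ this coincides exactly with the claimed quadruple $(A,\{,\}_\alpha,\circ_\alpha,\alpha)$. Because the base algebra is multiplicative and $\alpha$ is a morphism, the multiplicativity clause of Theorem~\ref{thm:twist} applies and guarantees that $A_\alpha$ is multiplicative. I expect no genuine obstacle here; the only points meriting a word of care are that $\mathrm{id}_A$ supplies the multiplicativity of the base object and that the morphism condition on $\alpha$ is automatically satisfied when the twisting is the identity.
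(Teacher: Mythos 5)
Your proposal is correct and is exactly the argument the paper intends: the corollary is stated without proof as an immediate consequence of Theorem~\ref{thm:twist}, obtained precisely by viewing the JMP algebra as the multiplicative Hom-JMP algebra $(A,\{,\},\circ,\mathrm{id}_A)$ and twisting by the morphism $\beta=\alpha$. Your care in noting that $\alpha$ is a genuine morphism (not merely weak) with respect to the identity twisting map is what secures the multiplicativity clause, just as the paper's theorem requires.
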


\subsection{Admissible Hom-JMP algebras}

Let $(A,\cdot,\alpha)$ be a  Hom-algebra. One can define the two following
new products:
$$[x,y]=x\cdot y-y\cdot x\ \ \textrm{and}\ \ x\circ y=\frac{1}{2}(x\cdot y+y\cdot x) ,\ \textrm{for\ all},\ x,y,z\in A.$$
We will denote $A^-$, (respectively $A^+$) the algebra $A$ with multiplication $[-,-]$, (respectively $\circ$).

\begin{lem}\cite{yau}\label{cyclic ass}
Let $(A,\cdot,\a)$ be Hom-flexible algebra. Then we have
\begin{equation}
    2S_A=J_{A^-}.
\end{equation}
\end{lem}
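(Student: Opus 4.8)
The plan is to expand the Hom-Jacobiator of the commutator algebra $A^-$ and reorganize all the resulting terms into Hom-associators, where the Hom-flexibility hypothesis is precisely what makes the surplus terms collapse.

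First I would write out $J_{A^-}(x,y,z)=\circlearrowleft_{x,y,z}[[x,y],\a(z)]$ explicitly. Since the multiplication of $A^-$ is $[a,b]=a\cdot b-b\cdot a$ and its twisting map is still $\a$, each cyclic summand expands as
\[
[[x,y],\a(z)]=(x\cdot y)\cdot\a(z)-(y\cdot x)\cdot\a(z)-\a(z)\cdot(x\cdot y)+\a(z)\cdot(y\cdot x),
\]
so that $J_{A^-}(x,y,z)$ becomes a sum of twelve signed terms after the cyclic summation over $x,y,z$.

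Next I would regroup these twelve terms into Hom-associators $as_A(a,b,c)=(a\cdot b)\cdot\a(c)-\a(a)\cdot(b\cdot c)$, keeping careful track of the placement of $\a$. Six of the terms assemble directly into the cyclic associator $S_A(x,y,z)=as_A(x,y,z)+as_A(y,z,x)+as_A(z,x,y)$, while the remaining six combine (with opposite sign) into $-\big(as_A(y,x,z)+as_A(z,y,x)+as_A(x,z,y)\big)$. This yields the intermediate identity
\[
J_{A^-}(x,y,z)=S_A(x,y,z)-\big(as_A(y,x,z)+as_A(z,y,x)+as_A(x,z,y)\big).
\]

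Finally I would invoke Hom-flexibility in the form $as_A(a,b,c)=-as_A(c,b,a)$, which converts $as_A(y,x,z)$, $as_A(z,y,x)$, $as_A(x,z,y)$ into $-as_A(z,x,y)$, $-as_A(x,y,z)$, $-as_A(y,z,x)$ respectively; hence the parenthesized expression equals $-S_A(x,y,z)$, and substituting gives $J_{A^-}=S_A+S_A=2S_A$. The only delicate point is the accurate bookkeeping of the twelve signed terms and their correct matching with the associators; once one observes that the sum of $as_A$ over all six permutations of $(x,y,z)$ splits, by flexibility, into three vanishing pairs, the statement follows at once.
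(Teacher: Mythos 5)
Your proof is correct: the intermediate identity $J_{A^-}(x,y,z)=as_A(x,y,z)+as_A(y,z,x)+as_A(z,x,y)-as_A(y,x,z)-as_A(z,y,x)-as_A(x,z,y)$ (the signed sum of Hom-associators over all six permutations) holds in any Hom-algebra, and Hom-flexibility $as_A(a,b,c)=-as_A(c,b,a)$ then identifies the odd-permutation block with $-S_A(x,y,z)$, giving $J_{A^-}=2S_A$. The paper itself gives no proof (it cites Yau for this lemma), and your expand-and-regroup computation is essentially the same argument as in that reference.
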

\begin{lem}\cite{ms}
A Hom-algebra $(A,\cdot,\alpha)$ is flexible if and only if
  \begin{equation}
[\alpha(x),y\circ z] = [x,y]\circ \alpha(z) + \alpha(y)\circ[x,z].
\end{equation}
\end{lem}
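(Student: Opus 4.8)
The plan is to reduce everything to the single product $\cdot$ and the map $\a$, and then to recognize the resulting expression as a combination of \emph{flexibility defects}. Writing
$$F(x,y,z) := as_A(x,y,z) + as_A(z,y,x),$$
observe that $A$ is flexible precisely when $F \equiv 0$ (this is the displayed equivalence in Definition~\ref{def:homass}), and that $F$ is symmetric under interchange of its outer arguments, $F(x,y,z) = F(z,y,x)$. The whole proof hinges on one algebraic identity relating the Leibniz-type expression to $F$.

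First I would substitute $[a,b] = a\cdot b - b\cdot a$ and $a\circ b = \tfrac12(a\cdot b + b\cdot a)$ into both sides and clear the factor $\tfrac12$. The left side $2[\a(x),\,y\circ z]$ expands to the four monomials $\a(x)\cdot(y\cdot z)$, $\a(x)\cdot(z\cdot y)$, $-(y\cdot z)\cdot\a(x)$, $-(z\cdot y)\cdot\a(x)$, while $2\big([x,y]\circ\a(z)+\a(y)\circ[x,z]\big)$ contributes eight further monomials. The bookkeeping of these twelve terms is the only delicate point: the key observation is that, with the correct signs, they regroup into exactly three flexibility defects, giving
$$2\big([\a(x),\,y\circ z] - [x,y]\circ\a(z) - \a(y)\circ[x,z]\big) = -F(x,y,z) - F(x,z,y) + F(y,x,z).$$

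From this displayed identity both implications follow at once. If $A$ is flexible, then every term on the right vanishes, so the claimed identity holds. Conversely, assuming the identity holds for all $x,y,z$, I would specialize $z=x$: by the outer-argument symmetry we have $F(y,x,x)=F(x,x,y)$, so the last two terms cancel and the relation collapses to $-F(x,y,x)=0$. Since $F(x,y,x) = 2\,as_A(x,y,x)$, this yields $as_A(x,y,x)=0$ for all $x,y$, which is one of the equivalent formulations of flexibility. The main obstacle is purely organizational, namely carrying out the twelve-term regrouping cleanly; once the displayed identity is established, the substitution $z=x$ together with the symmetry of $F$ does all the remaining work.
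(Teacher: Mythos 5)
Your proof is correct, and I have checked the crucial twelve-monomial identity: expanding $[a,b]=a\cdot b-b\cdot a$ and $a\circ b=\tfrac12(a\cdot b+b\cdot a)$ does yield
$$2\bigl([\a(x),y\circ z]-[x,y]\circ\a(z)-\a(y)\circ[x,z]\bigr)=-F(x,y,z)-F(x,z,y)+F(y,x,z),$$
with $F(x,y,z)=as_A(x,y,z)+as_A(z,y,x)$, and both implications then follow exactly as you say (the specialization $z=x$ kills the last two terms by the outer symmetry $F(x,x,y)=F(y,x,x)$ and leaves $2\,as_A(x,y,x)=0$). Note that the paper itself offers no proof to compare against: the lemma is quoted from the reference of Makhlouf and Silvestrov, so your argument is a self-contained verification rather than a reproduction of the paper's reasoning. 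Two small points worth making explicit: first, you divide by $2$ and conclude flexibility from $as_A(x,y,x)=0$; both steps use that the ground field has characteristic $0$ (the latter via linearization in the outer variable), which is consistent with the paper's standing assumption and with its stated equivalence of the two formulations of Hom-flexibility. Second, your identity is in fact stronger than what the lemma needs for the forward direction, since it exhibits the Leibniz defect as an explicit $\mathbb{Z}$-linear combination of flexibility defects; this is the same mechanism that underlies the paper's later Proposition on the characterization of Hom-flexible algebras, where the associator is decomposed into $J_{A^-}$, a bracket term, and $as_{A^+}$.
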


\begin{lem}\label{felxible accob}
Let $(A,\cdot,\a)$ be a Hom-flexible algebra. Then
\begin{equation}
    J_{A^-}(x^2,\a(y),\a(x))=0,\ \ \forall x,y \in A,\ \textrm{where}\ x^2=x\cdot x=x\circ x.
\end{equation}
\begin{proof}
Let $x, y \in A$. Then we have
\begin{align*}
 &[[x^2,\a(y)],\a^2(x)]+ [[\a(y),\a(x)],\a(x^2)]+\underbrace{[[\a(x),x^2],\a^2(y)] }_{=0}\\
 &=2[\a^2(x),[y,x]\circ \a(x)]+ 2 [[y,x],\a(x)] \circ \a^2(x)\\
 &=2\bigg \{\a^2(x).([y,x].\a(x))+\a^2(x).(\a(x).[y,x])-([y,x].\a(x)).\a^2(x)-(\a(x).[y,x]).\a^2(x)\\
 &+ ([y,x].\a(x)).\a^2(x)-(\a(x).[y,x]).\a^2(x)+\a^2(x).([y,x].\a(x))-\a^2(x).(\a(x).[y,x]) \bigg \}\\
 &=0.
\end{align*}

\end{proof}
\end{lem}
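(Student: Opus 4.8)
The plan is to write out the $A^-$-Jacobiator explicitly and dispose of its three summands one at a time. Expanding the cyclic sum gives
\begin{align*}
J_{A^-}(x^2,\alpha(y),\alpha(x)) = [[x^2,\alpha(y)],\alpha^2(x)] + [[\alpha(y),\alpha(x)],\alpha(x^2)] + [[\alpha(x),x^2],\alpha^2(y)].
\end{align*}
The third summand disappears at once: because $x^2 = x\cdot x$, flexibility in the form $as_A(x,x,x)=0$ says $(x\cdot x)\cdot\alpha(x) = \alpha(x)\cdot(x\cdot x)$, i.e. $[\alpha(x),x^2]=0$, so $[[\alpha(x),x^2],\alpha^2(y)]=0$. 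Hence everything reduces to showing that the first two summands cancel.

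To handle those two terms I would use the Leibniz-type characterization of flexibility recorded just above, namely $[\alpha(u),v\circ w] = [u,v]\circ\alpha(w) + \alpha(v)\circ[u,w]$, together with the skew-symmetry of $[,]$ and the commutativity of $\circ$. Since $x^2 = x\circ x$, one application gives $[x^2,\alpha(y)] = 2\,[x,y]\circ\alpha(x)$, and then skew-symmetry turns the first summand into $2\,[\alpha^2(x),[y,x]\circ\alpha(x)]$. Treating the second summand the same way, and using that $\alpha$ is a morphism for both products (so that $[\alpha(y),\alpha(x)]=\alpha[y,x]$ and $\alpha(x^2)=\alpha(x)\circ\alpha(x)$), another application of the Leibniz identity yields $2\,[[y,x],\alpha(x)]\circ\alpha^2(x)$. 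At this stage the claim becomes
\begin{align*}
2\,[\alpha^2(x),[y,x]\circ\alpha(x)] + 2\,[[y,x],\alpha(x)]\circ\alpha^2(x) = 0.
\end{align*}

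The last step is a direct expansion. Writing $w=[y,x]$, $a=\alpha(x)$ and $A=\alpha^2(x)$ and replacing each $[\,,\,]$ and $\circ$ by its definition in terms of $\cdot$, the eight monomials that appear cancel in pairs except for $2\,A\cdot(w\cdot a) - 2\,(a\cdot w)\cdot A$. But this remainder is precisely $-2\,as_A(\alpha(x),[y,x],\alpha(x))$, which vanishes by flexibility (the form $as_A(u,v,u)=0$ with $u=\alpha(x)$ and $v=[y,x]$). This finishes the argument.

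I expect the middle paragraph to be the main obstacle: one has to apply the Leibniz identity in the right slot, keep careful track of the signs produced by $[x,y]=-[y,x]$, and use the commutativity of $\circ$ to merge the two equal half-terms into the factor $2$. The pleasant structural feature is that flexibility is invoked exactly twice, first to kill the cyclic term $[\alpha(x),x^2]$, and finally to recognize the surviving difference as a flexibility associator.
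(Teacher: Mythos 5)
Your proof is correct and takes essentially the same route as the paper's: it kills the cyclic term $[[\a(x),x^2],\a^2(y)]$ using flexibility, rewrites the remaining two summands via the Leibniz characterization of flexibility as $2[\a^2(x),[y,x]\circ\a(x)]+2[[y,x],\a(x)]\circ\a^2(x)$, and expands everything in terms of $\cdot$ until only a flexibility associator survives. You are in fact slightly more explicit than the paper on two points it leaves tacit: converting the middle summand $[[\a(y),\a(x)],\a(x^2)]$ genuinely requires $\a$ to be multiplicative (i.e. $[\a(y),\a(x)]=\a([y,x])$ and $\a(x^2)=\a(x)\circ\a(x)$, a hypothesis absent from the lemma's statement but used silently in the paper's computation as well), and the surviving difference $2\,\a^2(x)\cdot([y,x]\cdot\a(x))-2\,(\a(x)\cdot[y,x])\cdot\a^2(x)$ equals $-2\,as_A(\a(x),[y,x],\a(x))$, so the conclusion rests on a second appeal to flexibility, which the paper compresses into its final ``$=0$''.
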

The following result gives a characterization of Hom-flexible algebras.
\begin{prop}\label{flexible relation}
 A  Hom-algebra $(A,\cdot,\alpha)$ is flexible if and only if
  \begin{equation}\label{AdmissibFlexibleCond}
  as_A(x,y,z)=\frac{1}{4}J_{A^-}(x,y,z)+\frac{1}{4}[\alpha(y),[z,x]]+as_{A^+}(x,y,z),\ \textrm{for\ all},\ x,y,z\in A.
  \end{equation}
\end{prop}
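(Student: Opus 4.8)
The plan is to reduce the claimed identity directly to the flexibility condition $as_A(x,y,z)+as_A(z,y,x)=0$ by expanding every term of the right-hand side in terms of the single product $\cdot$. There is no conceptual obstacle here; the proof is a direct bookkeeping computation, and the only point requiring care is tracking the monomials accurately through the cancellations.

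First I would rewrite each piece of the right-hand side using the substitutions $[u,v]=u\cdot v-v\cdot u$ and $u\circ v=\tfrac12(u\cdot v+v\cdot u)$. Expanding $as_{A^+}(x,y,z)=(x\circ y)\circ\a(z)-\a(x)\circ(y\circ z)$ produces eight monomials, each carrying coefficient $\tfrac14$; expanding $\tfrac14 J_{A^-}(x,y,z)=\tfrac14\circlearrowleft_{x,y,z}[[x,y],\a(z)]$ produces twelve monomials; and $\tfrac14[\a(y),[z,x]]$ contributes four more. Each monomial is one of the words in $x,y,z$ written either in left-normed form $(a\cdot b)\cdot\a(c)$ or in right-normed form $\a(a)\cdot(b\cdot c)$.

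The key step is then to collect like monomials across these three contributions. I expect all the mixed terms to cancel and only four monomials to survive, giving
$$\tfrac14 J_{A^-}(x,y,z)+\tfrac14[\a(y),[z,x]]+as_{A^+}(x,y,z)=\tfrac12\big((x\cdot y)\cdot\a(z)+\a(z)\cdot(y\cdot x)-(z\cdot y)\cdot\a(x)-\a(x)\cdot(y\cdot z)\big).$$
Recognizing $(x\cdot y)\cdot\a(z)-\a(x)\cdot(y\cdot z)=as_A(x,y,z)$ and $\a(z)\cdot(y\cdot x)-(z\cdot y)\cdot\a(x)=-as_A(z,y,x)$, the right-hand side collapses to $\tfrac12\big(as_A(x,y,z)-as_A(z,y,x)\big)$.

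Finally, the claimed identity reads $as_A(x,y,z)=\tfrac12\big(as_A(x,y,z)-as_A(z,y,x)\big)$, which, holding for all $x,y,z$, is equivalent to $as_A(x,y,z)+as_A(z,y,x)=0$ for all $x,y,z$; and this is precisely the definition of Hom-flexibility. Since the reduction is an equivalence, both implications of the proposition are established simultaneously. The main labor is the monomial bookkeeping in the collection step: the cancellations are numerous but entirely mechanical.
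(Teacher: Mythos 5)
Your computation is correct: the identity
\begin{equation*}
\tfrac14 J_{A^-}(x,y,z)+\tfrac14[\a(y),[z,x]]+as_{A^+}(x,y,z)
=\tfrac12\bigl(as_A(x,y,z)-as_A(z,y,x)\bigr)
\end{equation*}
does hold unconditionally in any Hom-algebra (I verified the monomial bookkeeping: the only surviving words are $(x\cdot y)\cdot\a(z)$, $\a(z)\cdot(y\cdot x)$, $(z\cdot y)\cdot\a(x)$, $\a(x)\cdot(y\cdot z)$, each with coefficient $\tfrac12$ and the right signs), and from it both implications follow at once, since Eq.~\eqref{AdmissibFlexibleCond} then reads $as_A(x,y,z)=\tfrac12\bigl(as_A(x,y,z)-as_A(z,y,x)\bigr)$, i.e.\ the linearized flexibility condition. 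This is a genuinely different organization from the paper's proof, which treats the two directions separately: for the forward direction the paper assumes flexibility, invokes Lemma~\ref{cyclic ass} ($2S_A=J_{A^-}$, itself a flexibility-dependent fact) to replace the Jacobiator by the cyclic associator, and then regroups the remaining monomials into associators using flexibility a second time; for the converse it specializes $z=x$ and checks that each term on the right vanishes. Your route buys uniformity and economy: a single identity valid with no hypothesis on $A$, no appeal to Lemma~\ref{cyclic ass}, and the equivalence becomes transparent rather than requiring two separate arguments. What the paper's route buys is the explicit appearance of $S_A$ and the relation $2S_A=J_{A^-}$, which the authors reuse elsewhere, and a converse that is essentially immediate. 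One small point of rigor in your favor: your final step identifies the condition with $as_A(x,y,z)+as_A(z,y,x)=0$ for all $x,y,z$, which the paper's Definition~\ref{def:homass} lists as an equivalent form of Hom-flexibility, so no linearization argument is missing.
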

\begin{proof}
If $(A,\cdot,\a)$ is Hom-flexible, then by Lemma \ref{cyclic ass}, we have
\begin{align*}
    &J_{A^-}(x,y,z)+[\a(y),[z,x]]+4as_{A^+}(x,y,z)\\
  &=2S_A(x,y,z)+\a(y)(zx)-\a(y)(xz)-(zx)\a(y)+(xz)\a(y)+(xy)\a(z) \\
&  +(yx)\a(z)+ \a(z)(xy)+\a(z)(yx)  -\a(x)(yz)-\a(x)(zy)-(yz)\a(x)-(zy)\a(x) \\
&=2S_A(x,y,z)-as_A(y,z,x)+as_A(y,x,z)-as_A(z,x,y)+as_A(x,z,y)+ as_A(x,y,z)-as_A(z,y,x)\\
&= 4 as_A(x,y,z).
\end{align*}
Conversely, suppose that Eq. \eqref{AdmissibFlexibleCond}  holds, then
\begin{align*}
&  as_A(x,y,x)=\frac{1}{4}J_{A^-}(x,y,x)+\frac{1}{4}[\alpha(y),[x,x]]+as_{A^+}(x,y,x)\\
&= \frac{1}{4} \Big([[x,y],\a(x)]+[[y,x],\a(x)]+[[x,x],\a(y)]\Big) +(x\circ y)\circ \a(x)-\a(x) \circ (y \circ x) \\
&=0. \ \   (\textrm{Since} \circ \textrm{is commutative}).
\end{align*}
Hence $A$ is Hom-flexible.
\end{proof}

\begin{cor}\label{equality of associator}
Let $(A,\cdot,\a)$ be a Hom-flexible algebra. Then
$$as_A(x^2,\a(y),\a(x))= as_{A^+}(x^2,\a(y),\a(x)).$$
\end{cor}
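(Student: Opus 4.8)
The plan is to specialize the flexible characterization of Proposition~\ref{flexible relation} to the triple $(x^2,\alpha(y),\alpha(x))$ and then argue that both correction terms drop out. Concretely, substituting $x\mapsto x^2$, $y\mapsto\alpha(y)$ and $z\mapsto\alpha(x)$ into Eq.~\eqref{AdmissibFlexibleCond} would give
\begin{equation*}
as_A(x^2,\alpha(y),\alpha(x))=\tfrac14 J_{A^-}(x^2,\alpha(y),\alpha(x))+\tfrac14[\alpha^2(y),[\alpha(x),x^2]]+as_{A^+}(x^2,\alpha(y),\alpha(x)),
\end{equation*}
so that the assertion reduces to showing that the first two summands on the right vanish.

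First I would dispose of the Jacobiator term: this is exactly the content of Lemma~\ref{felxible accob}, which asserts $J_{A^-}(x^2,\alpha(y),\alpha(x))=0$ for any Hom-flexible algebra. Hence $\tfrac14 J_{A^-}(x^2,\alpha(y),\alpha(x))=0$ with no further work.

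The remaining task is to kill the second term $\tfrac14[\alpha^2(y),[\alpha(x),x^2]]$ by proving $[\alpha(x),x^2]=0$. Here I would use flexibility in the form $as_A(x,y,x)=0$ evaluated at $y=x$, which yields $as_A(x,x,x)=(x\cdot x)\cdot\alpha(x)-\alpha(x)\cdot(x\cdot x)=0$, i.e.\ $x^2\cdot\alpha(x)=\alpha(x)\cdot x^2$, and therefore $[\alpha(x),x^2]=\alpha(x)\cdot x^2-x^2\cdot\alpha(x)=0$. This is precisely the identity already exploited, via the underbraced term, in the proof of Lemma~\ref{felxible accob}. Substituting both vanishing contributions back into the displayed equation then leaves $as_A(x^2,\alpha(y),\alpha(x))=as_{A^+}(x^2,\alpha(y),\alpha(x))$, as claimed.

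I do not expect a serious obstacle in this argument; the only points requiring care are the bookkeeping of the argument substitution in Eq.~\eqref{AdmissibFlexibleCond} (so that the Jacobiator and double-bracket terms are matched correctly against Lemma~\ref{felxible accob} and against $[\alpha(x),x^2]$), together with the elementary observation that flexibility specialized at $y=x$ forces $[\alpha(x),x^2]=0$.
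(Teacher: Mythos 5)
Your proposal is correct and follows exactly the route the paper intends by its ``Straightforward'' remark: specialize Eq.~\eqref{AdmissibFlexibleCond} of Proposition~\ref{flexible relation} to the triple $(x^2,\alpha(y),\alpha(x))$, kill the Jacobiator term by Lemma~\ref{felxible accob}, and kill the bracket term via $[\alpha(x),x^2]=0$, which follows from flexibility ($as_A(x,x,x)=0$) and is the same observation underlying the underbraced term in the paper's proof of that lemma. Nothing is missing; the bookkeeping of the substitution is handled correctly.
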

\begin{proof}
Straightforward.
\end{proof}
\begin{df}
  A Hom-algebra $(A,\cdot,\alpha)$ is said to be  an admissible Hom-JMP algebra if $(A,[,],\circ,\alpha)$ is a Hom-JMP algebra.
\end{df}
 \begin{rem}
 Given a Hom-JMP algebra $(A,\{,\},\circ,\a)$, then the vector space $A$ endowed with the morphism $\a$ and the product  defined by
 $$x\cdot y:=\frac{1}{2}\{x,y\}+x \circ y,$$
is  an admissible Hom-JMP algebra.
 \end{rem}

\begin{prop}\label{twist admissibleJMP}
  \label{thm:twist}
Let  $(A,\cdot,\alpha)$ be an admissible Hom-JMP algebra and $\beta \colon A \to A$ be a weak morphism.  Then
\[
A_\beta =  (A,\cdot_\beta,\beta\alpha)
\]
is also an admissible Hom-JMP algebra, where $x\cdot_\beta y=\beta(x)\cdot\beta (y)$.  Moreover, if $A$ is multiplicative and $\beta$ is a morphism, then $A_\beta$ is a multiplicative admissible Hom-JMP algebra.
 \end{prop}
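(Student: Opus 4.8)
The plan is to reduce this statement to Theorem~\ref{thm:twist}, which already establishes that Hom-JMP algebras are closed under twisting by weak morphisms. By definition of an admissible Hom-JMP algebra, $(A,[,],\circ,\alpha)$ is a genuine Hom-JMP algebra, where $[x,y]=x\cdot y-y\cdot x$ and $x\circ y=\frac{1}{2}(x\cdot y+y\cdot x)$. The only thing that really needs checking is that the commutator and anticommutator associated with the twisted product $\cdot_\beta$ coincide with the twisted operations $\beta[,]$ and $\beta\circ$ appearing in Theorem~\ref{thm:twist}; once this identification is made, the desired conclusion is immediate.

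First I would record that $\beta$, being a weak morphism of the Hom-algebra $(A,\cdot,\alpha)$ (so that $\beta(x\cdot y)=\beta(x)\cdot\beta(y)$), is automatically compatible with the derived operations. Applying $\beta$ to $[x,y]=x\cdot y-y\cdot x$ and to $x\circ y=\frac{1}{2}(x\cdot y+y\cdot x)$ yields $\beta([x,y])=[\beta(x),\beta(y)]$ and $\beta(x\circ y)=\beta(x)\circ\beta(y)$. In other words, $\beta$ is a weak morphism of the Hom-JMP algebra $(A,[,],\circ,\alpha)$ underlying the admissible structure, which is exactly the hypothesis required by Theorem~\ref{thm:twist}.

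Next I would compute the commutator and anticommutator of the twisted product directly: $[x,y]_\beta=x\cdot_\beta y-y\cdot_\beta x=\beta(x)\cdot\beta(y)-\beta(y)\cdot\beta(x)$, and by the previous step this equals $\beta([x,y])$; similarly $x\circ_\beta y=\beta(x\circ y)$. Hence the commutator and anticommutator of $\cdot_\beta$ are precisely the twisted bracket $\beta[,]$ and the twisted Jordan product $\beta\circ$. Since the twisting map of $A_\beta$ is $\beta\alpha$, the quadruple $(A,[,]_\beta,\circ_\beta,\beta\alpha)$ is identical to the twist of $(A,[,],\circ,\alpha)$ by the weak morphism $\beta$. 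By Theorem~\ref{thm:twist} this is a Hom-JMP algebra, which is exactly the assertion that $A_\beta=(A,\cdot_\beta,\beta\alpha)$ is an admissible Hom-JMP algebra.

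Finally, for the multiplicative claim, I would either invoke the multiplicative part of Theorem~\ref{thm:twist} applied to the induced structure, or verify directly that if $\alpha$ is a morphism of $\cdot$ and $\beta$ commutes with $\alpha$, then $\beta\alpha$ is a morphism of $\cdot_\beta$; the latter is a one-line check using $\beta\alpha\beta=\beta^2\alpha$. The argument is essentially bookkeeping, and the single genuinely necessary observation---the only place the weak-morphism hypothesis is used---is that passing from $\cdot$ to its commutator and anticommutator commutes with applying $\beta$. Consequently there is no analytic obstacle, and the main (mild) subtlety is simply identifying the two descriptions of the twisted operations so that Theorem~\ref{thm:twist} can be applied verbatim.
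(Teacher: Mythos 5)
Your proposal is correct: the reduction to Theorem~\ref{thm:twist}, via the observation that the commutator and anticommutator of $\cdot_\beta$ coincide with $\beta[,]$ and $\beta\circ$ (using the weak-morphism identity $\beta(x)\cdot\beta(y)=\beta(x\cdot y)$ to reconcile the two twisting conventions), is exactly the argument the paper has in mind, which it dismisses with the single word ``straightforward.'' Your treatment of the multiplicative case, either by invoking the multiplicative part of Theorem~\ref{thm:twist} (noting $\cdot_\beta=\tfrac{1}{2}[,]_\beta+\circ_\beta$) or by the direct check $\beta\alpha\beta=\beta^2\alpha$, is likewise sound.
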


\begin{proof}
straightforward.
\end{proof}
\begin{exa}
Every Hom-alternative algebra is an admissible Hom-JMP algebra.
\end{exa}
\begin{rem}
Note that not all admissible Hom-JMP algebras are hom-alternative algebras. Indeed,
let $A$ be the three-dimensional   algebra of basis $\{e_1,e_2,e_3\}$ defined by:
\begin{center}
\begin{tabular}{c|c|c|c}
  $\star$ & $e_1$ & $e_2$ & $e_3$ \\
   \hline
  $e_1$ & $0$ & $e_2$ & $-e_3$ \\
  \hline
  $e_2$ & $-e_2$ & $0$ & $e_1$ \\
  \hline
  $e_3$ & $e_3$ & $-e_1$ & $0$
\end{tabular}
\end{center}
According to \cite{benayadi}, $A$ is an admissible JMP algebra. Let  the morphism $\a: A \to A$ defined by:
$$\a(e_1)=e_1,\ \a(e_2)=\lambda e_2,\ \a(e_3)=\frac{1}{\lambda} e_3,\ \ \lambda \in \mathbb{K}\backslash\{0\}.$$
Then, in view of  Proposition \ref{twist admissibleJMP},  $(A,\star_\a,\a)$ is an admissible Hom-JMP algebra.
On the other hand, $$as_A(e_2,e_3,e_3)=(e_2\star_\a \star e_3)\star_\a \a(e_3)-\a(e_2)\star_\a(e_3 \star_\a e_3)=-\frac{1}{\lambda^2}e_3 \neq 0.$$ Then $A$ is not a Hom-alternative algebra.

\end{rem}

\begin{rem}\label{admiss==>flexible}
  Every admissible Hom-JMP algebra is Hom-flexible.
\end{rem}
\begin{thm}\label{admissible JMP alg}
Let $(A,\cdot,\a)$ be a Hom-flexible and Hom-Malcev admissible algebra. Then $A$ is an admissible Hom-JMP algebra if and only if $(A,\cdot,\a)$ satisfies the identity
\begin{align}\label{admissible JMP alg cond}
    R_{\a^2(x)}L_{x^2}\a=L_{\a(x^2)}R_{\a(x)}\a,\ \ \ \forall x \in A,
\end{align}
where $L_x$ (respectively, $R_x$) is the left multiplication (respectively, the right multiplication) by $x$ in the algebra $(A,\cdot,\a)$.
\end{thm}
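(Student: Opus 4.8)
The plan is to unwind the definition of an admissible Hom-JMP algebra into its three constituent requirements and to observe that, under the standing hypotheses, two of them are automatic, so that the remaining one becomes exactly the operator identity \eqref{admissible JMP alg cond}.

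By definition $A$ is admissible Hom-JMP precisely when $(A,[,],\circ,\a)$ is a Hom-JMP algebra, that is, when (i) $(A,[,],\a)$ is Hom-Malcev, (ii) $(A,\circ,\a)$ is Hom-Jordan, and (iii) the Hom-Leibniz identity \eqref{homleibniz} holds with $\{,\}=[,]$. Condition (i) holds by assumption, since $A$ is Hom-Malcev admissible. Condition (iii) reads $[\a(x),y\circ z]=[x,y]\circ\a(z)+\a(y)\circ[x,z]$, which by the Lemma of \cite{ms} characterizing Hom-flexible algebras is equivalent to the Hom-flexibility of $(A,\cdot,\a)$; hence it too holds by hypothesis. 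Consequently $A$ is an admissible Hom-JMP algebra if and only if condition (ii) is satisfied.

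Next I would reduce (ii) to a statement about the associator of $A$ itself. The product $\circ$ is automatically commutative, so $(A,\circ,\a)$ is a Hom-Jordan algebra if and only if the Hom-Jordan identity $as_{A^+}(x^2,\a(y),\a(x))=0$ holds, where $x^2=x\circ x=x\cdot x$. Since $A$ is Hom-flexible, Corollary \ref{equality of associator} identifies $as_{A^+}(x^2,\a(y),\a(x))$ with $as_A(x^2,\a(y),\a(x))$; thus (ii) is equivalent to $as_A(x^2,\a(y),\a(x))=0$ for all $x,y\in A$.

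Finally I would expand this associator directly from \eqref{homassociator}:
\begin{align*}
as_A(x^2,\a(y),\a(x)) &= (x^2\cdot\a(y))\cdot\a^2(x)-\a(x^2)\cdot(\a(y)\cdot\a(x)) \\
&= R_{\a^2(x)}L_{x^2}\a(y)-L_{\a(x^2)}R_{\a(x)}\a(y).
\end{align*}
Hence the vanishing of $as_A(x^2,\a(y),\a(x))$ for every $y\in A$ is literally the equality of the two operators $R_{\a^2(x)}L_{x^2}\a$ and $L_{\a(x^2)}R_{\a(x)}\a$, which is \eqref{admissible JMP alg cond}. Chaining this with the reductions of the previous paragraphs gives the stated equivalence. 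The argument is largely bookkeeping, and the one genuinely load-bearing point is the passage from the Jordan identity, which a priori lives in $A^+$, to an associator in $A$; this is exactly where Hom-flexibility enters through Corollary \ref{equality of associator}, the same hypothesis that already disposes of condition (iii).
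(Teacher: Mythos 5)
Your proof is correct and follows essentially the same route as the paper's: the paper's (much terser) proof likewise rests on Corollary \ref{equality of associator} to identify $as_{A^+}(x^2,\a(y),\a(x))$ with $as_A(x^2,\a(y),\a(x))$ under Hom-flexibility, leaving the remaining reductions (Malcev admissibility giving the Hom-Malcev part, flexibility giving the Hom-Leibniz identity via the lemma from \cite{ms}, and the rewriting of the associator as the operator identity \eqref{admissible JMP alg cond}) as ``immediate.'' You have simply written out in full the bookkeeping that the paper suppresses.
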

 \begin{proof}
Since $(A,\cdot,\a)$ is Hom-flexible and due to Corollary \ref{equality of associator}, we have
$$as_A(x^2,\a(y),\a(x))= as_{A^+}(x^2,\a(y),\a(x)).$$
The rest of the proof follows immediately.
 \end{proof}

\begin{exa}
Consider the five-dimensional Hom-algebra $(A,\cdot,\a)$ with basis $\{e_1,\ldots,e_5\}$ and multiplication table:
\begin{center}
\begin{tabular}{c|ccccc}
$\cdot$ & $e_1$ & $e_2$ & $e_3$ & $e_4$ & $e_5$\\
\hline
$e_1$ & $0$ & $e_5 + \frac{1}{2}e_4$ & $0$ & $\frac{\nu}{2}e_1$ & $0$\\
$e_2$ & $e_5 - \frac{1}{2}e_4$ & $0$ & $0$ & $-\frac{\nu^{-1}}{2}e_2$ & $0$\\
$e_3$ & $0$ & $0$ & $0$ & $\frac{\lambda}{2}e_3$ & $0$\\
$e_4$ & $-\frac{\nu}{2}e_1$ & $\frac{\nu^{-1}}{2}e_2$ & $-\frac{\lambda}{2}e_3$ & $-e_5$ & $0$\\
$e_5$ & $0$ & $0$ & $0$ &$0$ & $0$
\end{tabular}
\end{center}
where $\alpha \colon A \to A$ is given by
\begin{align*}
\alpha(e_1) &= \nu e_1,\quad \alpha(e_2) = \nu^{-1}e_2,\quad \alpha(e_3) = \lambda e_3,\\
\alpha(e_4) &= e_4,\quad \alpha(e_5) = e_5.
\end{align*}

D. Yau, in \cite{yau}, proved that $(A,\cdot,\a)$ is Hom-flexible and Hom-Malcev admissible algebra.  In addition, by a direct calculation, we can easily verify that it satisfies condition \eqref{admissible JMP alg cond}. Hence, using Theorem \ref{admissible JMP alg}, we conclude that $(A,\cdot,\a)$ is an admissible Hom-JMP algebra.

\end{exa}

 Now we prove  that multiplicative admissible Hom-JMP algebras are power Hom-associative. The power associativity of admissible JMP algebras is  shown  in \cite{benayadi} (Proposition 2.2).
Let us begin by  recalling  the definition of a power Hom-associative algebra.
\begin{df}\cite{yau15}
\label{def:hompower}
Let $(A,\cdot,\alpha)$ be a Hom-algebra, $x \in A$, and $n$ be a positive integer.
\begin{enumerate}
\item
The $n$th Hom-power $x^n \in A$ is defined by
\begin{equation}
\label{hompower}
x^1 = x, \qquad
x^n = x^{n-1}\cdot\alpha^{n-2}(x),\ \ n \geq 2.
\end{equation}

\item
$A$ is called $n$th power Hom-associative if
\begin{equation}
\label{nhpa}
x^n =  \alpha^{n-i-1}(x^i)\cdot \alpha^{i-1}(x^{n-i}),
\end{equation}
for all $x \in A$ and $i \in \{1,\ldots, n-1\}$.
\item
$A$ is called power Hom-associative if $A$ is $n$th power Hom-associative for all $n \geq 2$.
\end{enumerate}
\end{df}
If the twisting map  $\a$ is the identity map, then the $n$th power Hom-associativity becomes
\begin{equation}
\label{npa}
x^n = x^i\cdot x^{n-i}.
\end{equation}
 The class of power Hom-associative algebras contains  multiplicative right Hom-alternative algebras and non-commutative Hom-Jordan algebras.  Other results for power Hom-associative algebras can be found in \cite{yau15}.

  A well-known result of Albert \cite{albert1} says that an algebra $(A,\cdot)$ is power associative if and only if it is third and fourth power associative, i.e., the condition \eqref{npa} holds for $n = 3$, $4$.  Moreover, for \eqref{npa} to hold for $n = 3$, $4$, it is necessary and sufficient that
\[
(x\cdot x)\cdot x = x\cdot (x\cdot x) \quad\text{and}\quad ((x\cdot x)\cdot x)\cdot x = (x\cdot x)\cdot(x\cdot x),\; \textrm{for\; all}\; x \in A.
\]
 The Hom-versions of these statements are also proved in \cite{yau15}.  More precisely,  a multiplicative Hom-algebra $(A,\cdot,\alpha)$ is power Hom-associative if and only if it is third and fourth power Hom-associative, which in turn is equivalent to
\begin{equation}
\label{34}
x^3=x^2\cdot\alpha(x) = \alpha(x)\cdot x^2 \quad\text{and}\quad
x^4 =x^3\cdot\a^2(x)= \alpha(x^2)\cdot\alpha(x^2)
\end{equation}
for all $x \in A$.

The following result is the Hom-version of Proposition 2.2 in \cite{benayadi}.
\begin{thm}
\label{thm:power}
Every multiplicative admissible Hom-MJP algebra is power Hom-associative.
\end{thm}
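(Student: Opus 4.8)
The plan is to invoke the Hom-analogue of Albert's criterion recalled just above, so that it suffices to establish the third- and fourth-power Hom-associativity conditions \eqref{34}, namely $x^2\cdot\alpha(x)=\alpha(x)\cdot x^2$ and $x^3\cdot\alpha^2(x)=\alpha(x^2)\cdot\alpha(x^2)$ for all $x\in A$. I will extract exactly these two identities from the flexibility and Jordan structure already carried by an admissible Hom-JMP algebra, substituting $y=x$ into identities that are available for general arguments.

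First, the third-power condition. By Remark \ref{admiss==>flexible} the underlying Hom-algebra $(A,\cdot,\alpha)$ is Hom-flexible, so $as_A(x,y,x)=0$ for all $x,y\in A$. Specialising $y=x$ gives
$$as_A(x,x,x)=\mu(\mu(x,x),\alpha(x))-\mu(\alpha(x),\mu(x,x))=x^2\cdot\alpha(x)-\alpha(x)\cdot x^2=0,$$
which is precisely $x^2\cdot\alpha(x)=\alpha(x)\cdot x^2$. Since $x^3=x^2\cdot\alpha(x)$ by \eqref{hompower}, the first half of \eqref{34} follows.

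Next comes the fourth-power condition, which is where the Jordan structure and multiplicativity enter. Because $(A,[,],\circ,\alpha)$ is a Hom-JMP algebra, its Jordan part $(A,\circ,\alpha)$ satisfies the Hom-Jordan identity $as_{A^+}(x^2,\alpha(y),\alpha(x))=0$, where $x^2=x\circ x=x\cdot x$ since $[x,x]=0$. As $A$ is Hom-flexible, Corollary \ref{equality of associator} upgrades this to $as_A(x^2,\alpha(y),\alpha(x))=0$, that is
$$(x^2\cdot\alpha(y))\cdot\alpha^2(x)=\alpha(x^2)\cdot(\alpha(y)\cdot\alpha(x)),\qquad\forall\,x,y\in A,$$
which is also exactly condition \eqref{admissible JMP alg cond} of Theorem \ref{admissible JMP alg}. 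I then set $y=x$. Using that $\alpha$ is multiplicative, $\alpha(x)\cdot\alpha(x)=\alpha(x\cdot x)=\alpha(x^2)$, while $x^2\cdot\alpha(x)=x^3$, so the displayed identity collapses to $x^3\cdot\alpha^2(x)=\alpha(x^2)\cdot\alpha(x^2)$, the second half of \eqref{34}.

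Having verified both parts of \eqref{34}, the Hom-version of Albert's theorem recorded above yields that $(A,\cdot,\alpha)$ is power Hom-associative, completing the argument. The only genuinely non-formal point is matching the specialised associator identity to the target equation: multiplicativity is precisely what converts $\alpha(x)\cdot\alpha(x)$ into $\alpha(x^2)$, which is why the hypothesis that $A$ be multiplicative cannot be dropped. Everything else reduces to substituting $y=x$ into identities already guaranteed by Hom-flexibility and the Hom-Jordan axiom, so I expect no serious obstacle beyond this bookkeeping.
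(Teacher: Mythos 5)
Your proposal is correct and follows essentially the same route as the paper: Hom-flexibility yields the third-power identity via $as_A(x,x,x)=0$, and the Hom-Jordan identity combined with Corollary \ref{equality of associator} (specialised at $y=x$) yields the fourth-power identity, after which the Hom-version of Albert's criterion from \cite{yau15} concludes. Your only addition is making explicit that multiplicativity is what turns $\alpha(x)\cdot\alpha(x)$ into $\alpha(x^2)$, a step the paper performs silently.
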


\begin{proof}
As discussed above, by a result in \cite{yau15},  it is sufficient  to prove the two identities in \eqref{34}.  The Hom-flexibility  implies that
$$0 = as_A(x,x,x) = x^2\cdot\alpha(x) - \alpha(x)\cdot x^2,$$
which proves the first identity in \eqref{34}.  To prove the second  equality in \eqref{34}, since $(A,\circ,\a)$ is Hom-Jordan and due to Corollary \ref{equality of associator}, we get
\begin{align*}
0 &= as_A(x^2,\a(x),\a(x))\\
&= (x^2\cdot\a(x))\cdot\a^2(x)-\a(x^2)\cdot(\a(x)\cdot\a(x))\\
&=x^3\cdot\a^2(x)-\a(x^2)\cdot\a(x^2)
\end{align*}
We have proved the second identity in \eqref{34}.
\end{proof}

\section{Pseudo-Euclidian Hom-JMP algebras}
In this section, we extend the notion of pseudo-Euclidian JMP algebra to Hom-JMP algebras and provide some properties.
Let $(A,\cdot,\a)$ be a  Hom-algebra and $B: A \times A \to \mathbb{K}$  be a bilinear form. $B$ is called:
\begin{itemize}
  \item [(i)]  symmetric if $B(x,y)=B(y,x)$, $\forall x,y \in A$.
  \item [(ii)] non-degenerate if $B(x,y)=0,\ \forall y \in A \Rightarrow x=0$ and if $B(x,y)=0, \forall x \in A \Rightarrow y=0$.
  \item [(iii)] invariant if $B(x\cdot y,z)=B(x,y\cdot z)$, $\forall x,y,z \in A$.
\end{itemize}
 In this case, $(A,\cdot,\a,B)$ will be called a pseudo-Euclidian Hom-algebra if, in addition:
\begin{align*}
    & B(\a(x),y)=B(x,\a(y)),\ \quad \forall x,y \in A.
\end{align*}
\begin{df}
Let $(A,\{,\},\circ,\a)$ be a Hom-JMP algebra and $B:A \times A \to \mathbb{K}$ be a symmetric, non-degenerate and invariant bilinear form on $A$. We say that $(A,\{,\},\circ, \a,B)$ is a pseudo-Euclidian Hom-JMP algebra if $(A,\{,\},\a,B)$ and $(A,\circ,\a,B)$ are pseudo-Euclidian Hom-algebras.
\end{df}

\begin{df}
A Hom-JMP algebra $(A,\{,\},\circ,\a)$  is called Hom-pseudo-Euclidian if there exists $(B,\gamma)$, where $B$ is a symmetric and non-degenerate bilinear form on $A$ and $\gamma: A \to A$ is an homomorphism such that:
{\small\begin{align}\label{hom quadratic condition}
& B(\a(x),y)=B(x,\a(y));\  B(\{x,y\},\gamma(z))=B(\gamma(x),\{y,z\});\  B(x \circ y,\gamma(z))=B(\gamma(x), y \circ z),
\end{align}}
for all $x,y,z \in A$.
\end{df}
\begin{rem}
Note that we recover pseudo-Euclidian Hom-JMP algebras when $\gamma=id$.
\end{rem}
Let $(A,\{,\},\circ,B)$  be a pseudo-Euclidian JMP algebra.  We denote by $Aut_S(A,B)$ the set of symmetric automorphisms of $A$ with respect to $B$, that is automorphisms $\b:A \to A$ such that $B(\b(x),y)=B(x,\b(y)),\ \forall x,y \in A$.
 \begin{prop}\label{twistPseudEuc}
 Let $(A,\{,\},\circ,B)$  be a pseudo-Euclidian JMP algebra and $\a \in Aut_S(A,B)$. Then $A_\a=(A,\{,\}_\a,\circ_\a,\a,B_\a)$ is a pseudo-Euclidian Hom-JMP algebra, where for any $x,y \in A$
 \begin{align}\label{twist bilinear form}
 & \{x,y\}_\a=\{\a(x),\a(y)\};\ x \circ_\a y=\a(x) \circ \a(y);\ \textrm{and}\  B_\a(x,y)=B(\a(x),y).
 \end{align}

 \end{prop}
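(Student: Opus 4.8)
The plan is to verify directly that the twisted quadruple $A_\a = (A,\{,\}_\a,\circ_\a,\a,B_\a)$ satisfies all the defining conditions of a pseudo-Euclidian Hom-JMP algebra. By Corollary \ref{coro twist} (applied with the JMP morphism $\a$), we already know that $(A,\{,\}_\a,\circ_\a,\a)$ is a multiplicative Hom-JMP algebra, since $\a \in Aut_S(A,B)$ is in particular a JMP automorphism. So the entire burden is to check the three bilinear-form conditions for $B_\a$: that $B_\a$ is symmetric, non-degenerate, and invariant with respect to both $\{,\}_\a$ and $\circ_\a$, together with the compatibility $B_\a(\a(x),y)=B_\a(x,\a(y))$.

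First I would record that $B_\a(x,y)=B(\a(x),y)=B(x,\a(y))$, where the second equality is exactly the symmetry condition $\a \in Aut_S(A,B)$. From this, symmetry of $B_\a$ follows immediately: $B_\a(x,y)=B(x,\a(y))=B(\a(y),x)=B_\a(y,x)$, using that $B$ itself is symmetric. Non-degeneracy of $B_\a$ follows because $\a$ is an automorphism, hence bijective: if $B_\a(x,y)=B(\a(x),y)=0$ for all $y$, then non-degeneracy of $B$ forces $\a(x)=0$, whence $x=0$; the other side is symmetric. The compatibility condition $B_\a(\a(x),y)=B_\a(x,\a(y))$ is then a one-line consequence of the defining relation $B_\a(x,y)=B(\a(x),y)=B(x,\a(y))$ applied on each side.

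The only genuinely computational step is the invariance of $B_\a$ under each twisted product. For the Jordan product I would compute
\begin{align*}
B_\a(x \circ_\a y, z) &= B\bigl(\a(\a(x)\circ\a(y)),\,z\bigr) = B\bigl(\a^2(x)\circ\a^2(y),\,z\bigr),
\end{align*}
using that $\a$ is a morphism, and then push $\a^2$ across using invariance and symmetry of $B$ to match $B_\a(x, y\circ_\a z)$; the analogous manipulation handles $\{,\}_\a$. The main obstacle is therefore purely bookkeeping: keeping track of how the powers of $\a$ generated by the twisted products $\{x,y\}_\a=\{\a(x),\a(y)\}$ and $x\circ_\a y = \a(x)\circ\a(y)$ interact with the single $\a$ hidden in $B_\a$. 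I expect the key identity to be that $\a$ is a self-adjoint morphism for $B$, so that any $\a$ can be freely transferred from one argument of $B$ to the other, which lets the three invariance identities for $B_\a$ collapse onto the corresponding invariance identities for the original pseudo-Euclidian JMP structure $(A,\{,\},\circ,B)$.
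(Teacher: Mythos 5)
Your proposal is correct and follows essentially the same route as the paper: invoke Corollary \ref{coro twist} to get the Hom-JMP structure on $A_\a$, then verify symmetry, non-degeneracy, the $\a$-compatibility, and the two invariance identities of $B_\a$ directly from the symmetry and invariance of $B$ together with the self-adjointness of $\a$. The only cosmetic difference is in the invariance step, where the paper just moves the outer $\a$ of $B_\a$ across $B$ and then applies invariance of $B$, as in $B_\a(x\circ_\a y,z)=B(\a(x)\circ\a(y),\a(z))=B(\a(x),\a(y)\circ\a(z))=B_\a(x,y\circ_\a z)$, never needing your extra step of splitting $\a(\a(x)\circ\a(y))$ into $\a^2(x)\circ\a^2(y)$ via the morphism property.
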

\begin{proof}
By Corollary \ref{coro twist}, $(A,\{,\}_\a,\circ_\a,\a)$ is a Hom-JMP algebra.

The linear form $B_\a$ is nondegenerate since $B$ is nondegenerate and $\a$ bijective. Now, let $x,y,z \in A$, then
\begin{align*}
 B_\a(\{x,y\}_\a,z)&=B(\a(\{\a(x),\a(y)\}),z)\\
&= B(\{\a(x),\a(y)\},\a(z))\\
&=B(\a(x),\{\a(y),\a(z)\})\\
&=B_\a(x,\{y,z\}_\a),
\end{align*}
and
\begin{align*}
B_\a(x \circ_\a y, z) &= B(\a(\a(x) \circ \a(y)),z)\\
&=B(\a(x) \circ \a(y),\a(z))=B(\a(x),\a(y) \circ \a(z))\\
&=B_\a(x, y \circ_\a z).
\end{align*}
On the other hand,
\begin{align*}
& B_\a(x,y)=B(\a(x),y)=B(x,\a(y))=B(\a(y),x)=B_\a(y,x),
\end{align*}
and
\begin{align*}
&B_\a(\a(x),y)=B(\a(\a(x)),y)=B(\a(x),\a(y))=B_\a(x,\a(y)).
\end{align*}

\end{proof}
The following result allows to obtain new pseudo-Euclidian Hom-JMP algebras starting from  multiplicative pseudo-Euclidian Hom-JMP algebras.

\begin{prop}
Let $(A,\{,\},\circ,\a,B)$ be a pseudo-Euclidian Hom-JMP algebra. For any $n \geq 0$, the quadruple
\begin{align}
& A_n=\Big(A,\{,\}_n=\a^n\{,\},\circ_n=\a^n\;  \circ, \a^{n+1},B_n\Big),
\end{align}
where $B_n$ is defined for $x,y \in A$ by $B_n(x,y)=B(\a^n(x),y)$, determines a pseudo-Euclidian Hom-JMP algebra.
\end{prop}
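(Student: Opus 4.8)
The plan is to reduce this proposition to two earlier results by finding the right weak morphism to apply to the known twisting machinery. Observe that the twisting data $A_n = (A, \a^n\{,\}, \a^n\circ, \a^{n+1}, B_n)$ matches exactly the output of Theorem \ref{thm:twist} with twisting self-map $\b = \a^n$: indeed $\{,\}_\b = \b\{,\} = \a^n\{,\}$, $\circ_\b = \b\circ = \a^n\circ$, and the new twisting map is $\b\a = \a^n\a = \a^{n+1}$. So first I would verify that $\a^n$ is a weak morphism of the Hom-JMP algebra $(A,\{,\},\circ,\a)$. Since $\a$ is the structure map of a (not necessarily multiplicative) Hom-JMP algebra, one must check $\a^n\{x,y\} = \{\a^n(x),\a^n(y)\}$ and $\a^n(x\circ y) = \a^n(x)\circ\a^n(y)$. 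For $n=0$ this is trivial, and for $n\geq 1$ I would note that the hypothesis here should include multiplicativity (or, failing that, I would point out that $\a$ being a morphism for both products propagates to all powers $\a^n$ by an easy induction); granting that, Theorem \ref{thm:twist} immediately yields that $(A,\a^n\{,\},\a^n\circ,\a^{n+1})$ is a Hom-JMP algebra.

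Next I would establish the pseudo-Euclidian part, namely that $B_n(x,y) = B(\a^n(x),y)$ is symmetric, non-degenerate, and satisfies the three invariance/compatibility conditions in the definition of a pseudo-Euclidian Hom-JMP algebra relative to the new structure. Non-degeneracy follows because $\a$ is bijective (being an invertible structure map, or at least I would invoke this as in the proof of Proposition \ref{twistPseudEuc}) and $B$ is non-degenerate. For symmetry I would use the symmetry condition $B(\a(x),y) = B(x,\a(y))$ repeatedly: writing $B_n(x,y) = B(\a^n(x),y)$ and sliding the $\a$'s across via this identity gives $B(\a^n(x),y) = B(x,\a^n(y)) = B(\a^n(y),x) = B_n(y,x)$, using symmetry of $B$ itself at the last step. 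This is the same computation as in Proposition \ref{twistPseudEuc} but iterated $n$ times.

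The remaining compatibility conditions are the invariance of $B_n$ with respect to the twisted products and the mixed condition $B_n(\a^{n+1}(x),y) = B_n(x,\a^{n+1}(y))$. For the last one I would again slide $\a$'s using $B(\a(u),v)=B(u,\a(v))$. For invariance with respect to $\{,\}_n$, I would compute
\begin{align*}
B_n(\{x,y\}_n,z) &= B\big(\a^n(\a^n\{x,y\}),z\big) = B\big(\a^n\{\a^n(x),\a^n(y)\},z\big)\\
&= B\big(\{\a^n(x),\a^n(y)\},\a^n(z)\big) = B\big(\a^n(x),\{\a^n(y),\a^n(z)\}\big),
\end{align*}
where the third equality pushes one copy of $\a^n$ across $B$ via the symmetry identity and the last step uses invariance of $B$ for the original bracket; then reversing the process gives $B_n(x,\{y,z\}_n)$. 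The identical argument with $\circ$ in place of $\{,\}$ handles the Jordan product. The main obstacle I anticipate is bookkeeping the powers of $\a$ correctly — specifically making sure the $\a^n$ coming from the twisted product and the $\a^n$ defining $B_n$ combine with the original invariance of $B$ so that everything lands symmetrically; once the pattern from Proposition \ref{twistPseudEuc} (the $n=1$ case) is in hand, these are routine iterations of the same two moves (pushing $\a$ through $B$, and applying invariance of the untwisted $B$), so no genuinely new idea is required beyond the correct indexing.
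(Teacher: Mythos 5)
Your proof is correct, and since the paper's own proof of this proposition is the single word ``straightforward,'' your reduction---the twisting theorem for weak self-morphisms (Theorem 1.1) applied with $\b=\a^n$ for the algebraic structure, followed by iterated use of $B(\a(x),y)=B(x,\a(y))$ and of the invariance of $B$ for the form $B_n$---is exactly the intended argument, filled in correctly. Both of your caveats are legitimate and point at imprecisions in the paper's statement rather than at defects in your proof. Multiplicativity is indeed needed (without it $\a^n$ need not be a weak morphism, and even a direct verification of the Hom-Leibniz identity and of the invariance of $B_n$ for $A_n$ breaks down); the paper clearly intends it, since the sentence introducing the proposition speaks of ``multiplicative pseudo-Euclidian Hom-JMP algebras,'' even though the hypothesis is missing from the formal statement. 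Likewise, non-degeneracy of $B_n$ is \emph{equivalent} to injectivity of $\a^n$ (since $B_n(x,y)=0$ for all $y$ iff $\a^n(x)=0$), so some regularity hypothesis on $\a$ must be added: for instance, a multiplicative pseudo-Euclidian Hom-JMP algebra with $\a=0$ satisfies every stated hypothesis yet has $B_n\equiv 0$ for all $n\ge 1$. Your parenthetical claim that $\a$ is ``an invertible structure map'' is therefore not justified by the statement as written, but it is precisely the assumption under which the proposition is true, and you were right to flag it explicitly rather than pass over it in silence as the paper does.
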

\begin{proof}
straightforward.
\end{proof}
We provide here a construction of pseudo-Euclidian Hom-JMP algebra from an arbitrary Hom-JMP algebra (not necessarily pseudo-Euclidian) .

Let $(A,\{,\},\circ)$ be a JMP-algebra and $A^*$ be the dual vector space of the underlying
vector space of $A$.  On the vector space $P=A\oplus A^*$, we define the following bracket $\{,\}_P$ and multiplication $\circ_P$ by:
\begin{align}\label{T extension}
& \{x+f,y+g\}_P:=\{x,y\}+f  ad_y-g   ad_x,\\
& (x+f)\circ_P (y+g):= x \circ y+ f  L_y + g  L_x, \ \forall (x,f),(y,g) \in P,
\end{align}
where $ad_x(y)=\{x,y\}$ and $L_x(y)=x\circ y$.
Moreover, we consider the bilinear form $B$ defined on $P$ by:
\begin{align}\label{bilinear form direct sum}
& B(x+f,y+g)=f(y)+g(x),\ \forall (x,f),(y,g) \in P.
\end{align}
$(P,\{,\}_P,\circ_P,B)$ is pseudo-Euclidian JMP algebra algebras called the  $T^*$-extension of $A$ by means of $A^*$.
\begin{prop}
  Let $(A, \{ , \},\circ)$ be a JMP algebra and $\alpha\in Aut(A)$. Then the endomorphism $\beta=\alpha+^t\alpha$ of
$P$ is an automorphism of $P$  if and only if $Im (\alpha^2-Id)\subseteq Z_J(A) \cap Z_M(A)$, where
$Z_J(A)$ is the center of $(A,\circ)$ and $Z_M(A)$ is the center of $(A, \{ , \})$ .
Hence, if $Im (\alpha^2-Id)\subseteq Z_J(A)\cap Z_M(A)$ then $(P, \{ , \}_{P,\beta},\circ_{P,\beta},B_\beta)$ is a regular pseudo-Euclidian Hom-JMP algebra.
\end{prop}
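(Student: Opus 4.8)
The plan is to analyze $\beta = \alpha + {}^t\alpha$ through its componentwise action $\beta(x+f) = \alpha(x) + {}^t\alpha(f)$ on $P = A \oplus A^*$, where the transpose satisfies $({}^t\alpha(f))(z) = f(\alpha(z))$. Since $\alpha \in \mathrm{Aut}(A)$ is invertible, so is ${}^t\alpha$, hence $\beta$ is \emph{always} a linear bijection of $P$; thus the entire content of the equivalence lies in deciding when $\beta$ respects the operations $\{,\}_P$ and $\circ_P$. I would first dispose of the bilinear form: for $u = x+f$ and $v = y+g$ a direct evaluation gives $B(\beta(u),v) = f(\alpha(y)) + g(\alpha(x)) = B(u,\beta(v))$, so $\beta$ is automatically $B$-symmetric and this part of the pseudo-Euclidian condition costs no hypothesis.

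Next I would compare $\beta(\{u,v\}_P)$ with $\{\beta(u),\beta(v)\}_P$ componentwise. The $A$-components coincide immediately because $\alpha$ is a JMP-automorphism, so $\alpha\{x,y\} = \{\alpha(x),\alpha(y)\}$. For the $A^*$-components, evaluating at an arbitrary $z \in A$, the contribution of $f$ to $\beta(\{u,v\}_P)$ is $f(\{y,\alpha(z)\})$, whereas its contribution to $\{\beta(u),\beta(v)\}_P$ is $f(\{\alpha^2(y),\alpha(z)\})$ (one factor of $\alpha$ coming from the transpose and one from $\alpha$ being a Malcev morphism). Since this must hold for every $f \in A^*$ and every $z$, and since $\alpha(z)$ runs over all of $A$ by bijectivity, non-degeneracy of the canonical pairing forces $\{(\alpha^2 - \mathrm{Id})(y), w\} = 0$ for all $y,w \in A$, i.e. $\mathrm{Im}(\alpha^2 - \mathrm{Id}) \subseteq Z_M(A)$. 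An identical computation with $\circ_P$ replaces the brackets by Jordan products and yields $f(y \circ \alpha(z))$ versus $f(\alpha^2(y) \circ \alpha(z))$, giving $\mathrm{Im}(\alpha^2 - \mathrm{Id}) \subseteq Z_J(A)$. Conjoining the two constraints proves the forward direction, and conversely, if $\mathrm{Im}(\alpha^2 - \mathrm{Id})$ lies in $Z_J(A) \cap Z_M(A)$, both comparisons collapse and $\beta$ preserves both operations.

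For the last assertion I would observe that, under the centrality hypothesis, $\beta$ is a symmetric automorphism of the pseudo-Euclidian JMP algebra $(P,\{,\}_P,\circ_P,B)$, that is $\beta \in Aut_S(P,B)$. Applying Proposition \ref{twistPseudEuc} to $P$ with this $\beta$ then delivers that $(P,\{,\}_{P,\beta},\circ_{P,\beta},\beta,B_\beta)$ is a pseudo-Euclidian Hom-JMP algebra, and it is regular precisely because the twisting map $\beta$ is bijective. The only delicate point in the whole argument is the bookkeeping of the exponent in the $A^*$-components: one must verify that exactly $\alpha^2$ (and not $\alpha$) appears there, and then read off the centrality condition from the ``for all $f$'' quantifier via non-degeneracy of the pairing; everything else is automatic from $\alpha$ already being a JMP-automorphism.
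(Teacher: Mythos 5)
Your proof is correct and follows essentially the same route as the paper's: componentwise comparison of $\beta(\{u,v\}_P)$ with $\{\beta(u),\beta(v)\}_P$ (and likewise for $\circ_P$), reading off the condition $\mathrm{Im}(\alpha^2-\mathrm{Id})\subseteq Z_M(A)\cap Z_J(A)$ from the arbitrariness of $f$ and $z$ together with bijectivity of $\alpha$, the direct verification that $\beta$ is $B$-symmetric, and an appeal to Proposition \ref{twistPseudEuc} for the final assertion. Your explicit bookkeeping of the $\alpha^2$ factor in the $A^*$-component and the remark that regularity follows from bijectivity of $\beta$ are exactly the (implicit) content of the paper's argument.
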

\begin{proof}Let $x,y\in A$ and $f,g\in A^*$.
\begin{align*}
\beta(\{x+f,y+g\}_{P})&=\beta(\{x,y\}+f ad_y-g  ad_x)\\
 &= \alpha(\{x,y\})+f ad_y \alpha -g ad_x  \alpha,
\end{align*}
and
\begin{eqnarray*}
\{\beta (x+f),\beta (y+g)\}_P&=&\{ \alpha(x)+f \alpha, \alpha(y)+g \alpha\}_P\\
\ &=& \{\alpha (x),\alpha (y)\}+f \alpha  ad_{ \alpha (y)} -g  \alpha ad_{ \alpha (x)} ,
\end{eqnarray*}
Then $\beta(\{x+f,y+g\}_P)=\{\beta (x+f),\beta (y+g)\}_P$ if and only if
$$\forall x,y\in A,\quad fad_y \alpha -g  ad_x  \alpha=f\alpha  ad_{ \alpha (y)} -g \alpha ad_{ \alpha (x)}.
$$
That is for all $z\in A$
$$f(\{y,\alpha (z)\})-g(\{x,\alpha (z)\})=f(\alpha\{\alpha(y),z\})-g(\alpha\{\alpha(x),z\}).$$
Hence, $\beta$ is an automorphism of $(P,\{,\}_P)$ if and only if $f(\{x,\alpha(y)\})=f(\alpha\{\alpha(x),y\})$, $\forall f\in A^*$ $\forall x,y \in A,$
  which is equivalent to $\{x,\alpha(y)\}=\alpha(\{\alpha(x),y\})$ $\forall x,y \in A.$

As a consequence, $\beta$ is an automorphism of $(P,\{,\}_P)$ if and only if $\{\alpha^2(x)-x,\alpha (y)\}=0$ $\forall x,y \in A$, ie. $Im(\alpha^2 -id)\subseteq Z_M(A)$, since $\alpha \in Aut(A).$

Similarly,
 $\beta$ is an automorphism of $(P,\circ_P)$ if and only if  $Im(\alpha^2 -id)\subseteq Z_J(A)$. Then $\beta \in Aut(P)$ if and only if $Im(\alpha^2 -id)\subseteq Z_M(A)\cap Z_J(A)$.

In the following we show that $\beta$ is symmetric with respect to $B$. Indeed, let $x,y\in A$ and $f,g\in A^*$
\begin{align*}
B(\beta (x+f),y+g)&=B(\alpha(x)+f \alpha,y+g)\\
&= f (\alpha(y))+g(\alpha(x))\\
&= B(x+f,\alpha(y)+g\alpha)
\\
&= B(x+f,\beta (y+g)).
\end{align*}

The last assertion  is a consequence of the previous calculations and  Proposition \ref{twistPseudEuc}.

\end{proof}
\begin{cor}
  Let $(A, \{ , \},\circ)$ be a JMP algebra and $\theta\in Aut(A)$ such that $\theta^2=id$ $(\theta$ is an involution$)$. Then   $(P, \{ , \}_{P,\beta},\circ_{P,\beta},B_\beta)$ is a regular pseudo-Euclidian Hom-JMP algebra, where  $\beta=\theta+^t\theta$.
\end{cor}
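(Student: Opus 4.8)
The plan is to apply the preceding Proposition with $\alpha=\theta$, observing that its hypothesis on the image of $\alpha^2-Id$ becomes vacuous in the involutive case. Since $\theta$ is an involution we have $\theta^2=id$, so $\theta^2-id$ is the zero endomorphism of $A$ and its image is $\{0\}$. As $\{0\}$ is contained in every linear subspace of $A$, we get in particular $Im(\theta^2-id)=\{0\}\subseteq Z_J(A)\cap Z_M(A)$, so the sole additional condition required by the Proposition is satisfied automatically, with no computation.

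First I would note that $\theta\in Aut(A)$ by hypothesis, so $\beta=\theta+^t\theta$ is a well-defined endomorphism of $P=A\oplus A^*$ of exactly the type treated in the Proposition. Having verified the inclusion $Im(\theta^2-id)\subseteq Z_J(A)\cap Z_M(A)$ above, the Proposition yields at once that $\beta$ is an automorphism of $P$, that $\beta$ is symmetric with respect to the bilinear form $B$ of \eqref{bilinear form direct sum}, and hence that $(P,\{,\}_{P,\beta},\circ_{P,\beta},B_\beta)$ is a regular pseudo-Euclidian Hom-JMP algebra. Regularity is immediate, since an automorphism of $P$ is in particular bijective.

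There is essentially no obstacle: this corollary is precisely the degenerate specialization of the Proposition in which $\theta^2-id=0$, so that the center condition holds trivially. The only point worth stating explicitly in the write-up is that an involutive automorphism automatically lies in the admissible class $\alpha^2=Id$ singled out by the Proposition, which is why no hypothesis on the centers $Z_J(A)$ and $Z_M(A)$ needs to be imposed in the statement.
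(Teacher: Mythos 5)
Your proof is correct and is exactly the argument the paper intends: the corollary is an immediate specialization of the preceding proposition, since $\theta^2=id$ forces $Im(\theta^2-Id)=\{0\}\subseteq Z_J(A)\cap Z_M(A)$ trivially. The only cosmetic point is your closing phrase about an ``admissible class $\alpha^2=Id$'': the proposition does not single out such a class, it only requires the image condition, which your first paragraph already verifies correctly.
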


\section{Hom-Lie-Jordan-Poisson Triple System}
In this section, we generalize the notion of Lie-Jordan-Poisson triple system introduced in \cite{benayadi} to the Hom setting. We provide the relationships of this class of algebras with admissible Hom-JMP algebras.  Finally, we endow it with a symmetric non-degenerate invariant bilinear form and give some key constructions.

\begin{df}
\label{def:hjts}
A Hom-Lie triple system is a Hom-triple system $(L,[,,],\alpha=(\a_1,\a_2))$ that satisfies
the following conditions
\begin{itemize}
\item[(i)] $[x,y,z] = -[y,x,z]$ \quad\text{(left skewsymmetry)},
\item
[(ii)]
$[x,y,z] + [y,z,x] + [z,x,y]=0$ \quad\text{(ternary Jacobi identity)},
\item
[(iii)]
$
[\alpha_1(x),\alpha_2(y),[u,v,w]] = [[x,y,u],\alpha_1(v),\alpha_2(w)] + [\alpha_1(u),[x,y,v],\alpha_2(w)] + [\alpha_1(u),\alpha_2(v),[x,y,w]]
$,
\end{itemize}
 for all $u,v,w,x,y ,z\in L$.
\end{df}
A particular situation , interesting for our setting, occurs when  twisting maps $\a_i$ are all equal, that is $\a_1=\a_2=\a$, and $\a([x,y,z])=[\a(x),\a(y),\a(z)]$, for all $x,y,z \in L$. The Hom-Lie triple system $(L,[.,.,.],\a)$ is said to be multiplicative.
\begin{df}
A  Hom-Lie-Jordan-Poisson  triple system is a quadruple $(A,\{,,\},\circ, \a)$ such that
\begin{itemize}
  \item [(i)] $(A,\circ, \a)$ is a Hom Jordan algebra.
  \item [(ii)] $(A,\{ , , \},\a)$ is a multiplicative Hom-Lie triple system.
  \item [(iii)] $\{\a(x),\a(y),z \circ t \}= \{x,y,z\}\circ \a(t) + \a(z) \circ \{x,y,t\}$, $\forall x,y,z,t \in A$.
\end{itemize}
\end{df}
In the case where $(A,\circ, \a)$ is a  commutative Hom-associative algebra,  the quadruple $(A,\{,,\},\circ, \a)$ is called a Hom-Lie-Poisson triple system.

\begin{lem}\cite{NourouIssa}
Let $(A,\{,\},\a)$ be a Hom-Malcev algebra.  Then $(A,\{,,\},\a^2)$ is a multiplicative Hom-Lie triple system, where
\begin{align}
& \{x,y,z\}=2\{\{x,y\},\a(z)\}-\{\{y,z\},\a(x)\}-\{\{z,x\},\a(y)\},\ \forall x,y,z \in A.
\end{align}
\end{lem}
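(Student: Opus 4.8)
The goal is to check that the ternary operation $\{x,y,z\}=2\{\{x,y\},\a(z)\}-\{\{y,z\},\a(x)\}-\{\{z,x\},\a(y)\}$, together with the twisting map $\a^2$, satisfies the three axioms of Definition \ref{def:hjts} (with $\a_1=\a_2=\a^2$) and is multiplicative. Two of these are formal. Setting $A_0=\{\{x,y\},\a(z)\}$, $B_0=\{\{y,z\},\a(x)\}$ and $C_0=\{\{z,x\},\a(y)\}$, so that $\{x,y,z\}=2A_0-B_0-C_0$, the left skewsymmetry (i) follows by adding the expression for $\{y,x,z\}$ and using only the skewsymmetry of the binary bracket, while the ternary Jacobi identity (ii) follows because in the cyclic sum of $2A_0-B_0-C_0$ every coefficient equals $2-1-1=0$. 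Multiplicativity, $\a^2(\{x,y,z\})=\{\a^2(x),\a^2(y),\a^2(z)\}$, is obtained by pushing $\a^2$ through each term and invoking that $\a$ is an endomorphism of $(A,\{,\})$; this multiplicativity of $\a$ is exactly what makes the resulting triple system multiplicative.

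The only substantial step is the fundamental identity (iii), which with $\a_1=\a_2=\a^2$ reads
$$\{\a^2(x),\a^2(y),\{u,v,w\}\}=\{\{x,y,u\},\a^2(v),\a^2(w)\}+\{\a^2(u),\{x,y,v\},\a^2(w)\}+\{\a^2(u),\a^2(v),\{x,y,w\}\}.$$
I would first record the reformulation
$$\{x,y,z\}=3\{\{x,y\},\a(z)\}-J_A(x,y,z),$$
which rewrites the ternary bracket through the Hom-Jacobiator and matches the form in which the Hom-Malcev identity \eqref{homMalcevid} is stated. Since (iii) is multilinear whereas \eqref{homMalcevid} is quadratic in $x$, the key tool is the linearized Hom-Malcev identity, obtained by substituting $x\mapsto x+u$ and extracting the part bilinear in $(x,u)$:
$$J_A(\a(x),\a(y),\{u,z\})+J_A(\a(u),\a(y),\{x,z\})=\{J_A(x,y,z),\a^2(u)\}+\{J_A(u,y,z),\a^2(x)\}.$$

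The main obstacle is the bookkeeping of the resulting multilinear expansion. Substituting the Jacobiator form into both sides of (iii) turns each side into a sum of triply nested binary brackets of the shape $\{\{\{\cdot,\cdot\},\a(\cdot)\},\a^2(\cdot)\}$, and the twists must be aligned at level $\a^2$ using the multiplicativity of $\a$. My plan is to treat the $3\{\{x,y\},\a(\cdot)\}$-part and the $J_A$-part of the ternary bracket separately, group the left-hand side so that the linearized Hom-Malcev identity applies term by term, and absorb the remaining Jacobiator contributions through the skewsymmetry of $\{,\}$ and the definition of $J_A$. After these cancellations the two sides agree, establishing (iii). Because the complete computation is lengthy and the statement is attributed to \cite{NourouIssa}, one may alternatively present only this reduction and refer to that source for the full term-by-term verification.
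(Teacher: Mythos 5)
The paper gives no proof of this lemma at all: it is stated with the citation \cite{NourouIssa} and used as a known result, so there is no internal argument to compare against. Judged on its own terms, your proposal is correct and complete on the formal axioms: left skewsymmetry, the ternary Jacobi identity (the $2-1-1=0$ bookkeeping is right), and multiplicativity --- where you correctly observe that one needs $\a$ to be an endomorphism of $(A,\{,\})$, a hypothesis the lemma leaves implicit (the result in \cite{NourouIssa} is stated for \emph{multiplicative} Hom-Malcev algebras). Your reformulation $\{x,y,z\}=3\{\{x,y\},\a(z)\}-J_A(x,y,z)$ and your linearized Hom-Malcev identity are also both correct.

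The gap is the fundamental identity (iii), which is the entire substance of the lemma. You describe a plan --- expand both sides, align the twists, apply the linearized Hom-Malcev identity term by term, cancel --- and then assert that ``after these cancellations the two sides agree.'' That assertion is precisely what has to be proved, and it is not a routine consequence of the single linearized identity you state: already in the untwisted case (Loos's theorem that a Malcev algebra carries a Lie triple system via this ternary product) the verification requires further derived consequences of the Malcev identity (for instance, that the associated operators $z\mapsto\{x,y,z\}$ act as derivations) and a genuinely long computation; the Hom-version in \cite{NourouIssa} has the same character, with extra care needed to keep the powers of $\a$ consistent. So, as a standalone proof, your text does not establish (iii). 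Your fallback --- present the reduction and refer to \cite{NourouIssa} for the term-by-term verification --- is legitimate, and it is in fact exactly what the paper does; but then the write-up should state plainly that (iii) is being quoted from that source rather than proved, instead of claiming the cancellations go through.
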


\begin{prop}
Let $(A,\{,\},\circ,\a)$ be a Hom-JMP algebra. Then the quadruple $(A,\{,,\},\circ_\a, \a^2)$  is a  Hom-Lie-Jordan-Poisson  triple system, where
\begin{align}\label{malcev==>LTS}
& \{x,y,z\}=2\{\{x,y\},\a(z)\}-\{\{y,z\},\a(x)\}-\{\{z,x\},\a(y)\},\ \forall x,y,z \in A,\\
&\label{Jordan==>HomJordan} x \circ_\a y=\a(x) \circ \a(y),\ \forall x,y \in A.
\end{align}
\end{prop}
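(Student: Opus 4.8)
The plan is to verify the three defining axioms of a Hom-Lie-Jordan-Poisson triple system for $(A,\{,,\},\circ_\a,\a^2)$ one at a time, and two of them come essentially for free. Axiom (ii) is exactly the conclusion of the preceding lemma of \cite{NourouIssa}: the bracket part $(A,\{,\},\a)$ of a Hom-JMP algebra is a Hom-Malcev algebra, and the ternary operation \eqref{malcev==>LTS} is precisely the one produced there, so $(A,\{,,\},\a^2)$ is a multiplicative Hom-Lie triple system. For axiom (i), I would observe that $x\circ_\a y=\a(x)\circ\a(y)=\a(x\circ y)$ by multiplicativity, so $\circ_\a=\a\,\circ$ while $\a^2=\a\cdot\a$; thus $(A,\circ_\a,\a^2)$ is nothing but the twist of the Hom-Jordan algebra $(A,\circ,\a)$ by the (weak) self-morphism $\a$, which is again Hom-Jordan by the twisting construction (Theorem~\ref{thm:twist} restricted to the Jordan structure, and the Hom-Jordan twisting result of \cite{yau}). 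Commutativity of $\circ_\a$ is inherited from that of $\circ$.

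The substance of the proof is axiom (iii), and the entire computation is a repeated application of the two equivalent forms \eqref{homleibniz} and \eqref{homleibniz'} of the Hom-Leibniz identity. Writing $W=z\circ_\a t=\a(z)\circ\a(t)$, I would expand the left-hand side $\{\a^2(x),\a^2(y),W\}$ via the definition \eqref{malcev==>LTS} into its three constituent terms $2\{\{\a^2(x),\a^2(y)\},\a(W)\}$, $-\{\{\a^2(y),W\},\a^3(x)\}$ and $-\{\{W,\a^2(x)\},\a^3(y)\}$. In the first term one has $\a(W)=\a^2(z)\circ\a^2(t)$ and $\{\a^2(x),\a^2(y)\}=\a(\a\{x,y\})$, so a single use of \eqref{homleibniz} splits it into two $\circ$-products. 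In the second and third terms the product $W$ sits inside the inner bracket, so I would first expand that inner bracket by \eqref{homleibniz} and then the outer bracket by \eqref{homleibniz'}; each of these two terms thereby produces four summands.

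The main obstacle is bookkeeping: the second and third terms each contribute, besides the expected summands, two cross terms of the shape $\a^2\{-,-\}\circ\a^2\{-,-\}$. The key observation that makes everything collapse is that these four cross terms cancel in pairs once one uses the skewsymmetry of the bracket together with the commutativity of $\circ$; for instance $-\a^2\{y,z\}\circ\a^2\{t,x\}$ coming from the second term cancels $-\a^2\{z,y\}\circ\a^2\{t,x\}=+\a^2\{y,z\}\circ\a^2\{t,x\}$ coming from the third. After this cancellation exactly six summands survive, and repeatedly pulling $\a$ through the products by multiplicativity ($\a\{a,b\}=\{\a(a),\a(b)\}$ and $\a(a\circ b)=\a(a)\circ\a(b)$) lets me regroup them as $\a\{x,y,z\}\circ\a^3(t)+\a^3(z)\circ\a\{x,y,t\}$. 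Recognising $\a\{x,y,z\}\circ\a^3(t)=\{x,y,z\}\circ_\a\a^2(t)$ and $\a^3(z)\circ\a\{x,y,t\}=\a^2(z)\circ_\a\{x,y,t\}$ through \eqref{Jordan==>HomJordan} gives exactly the right-hand side of (iii). I expect the only delicate points to be choosing the correct form of the Hom-Leibniz identity in each slot and keeping the signs straight during the cross-term cancellation.
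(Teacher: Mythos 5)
Your proposal is correct and follows essentially the same route as the paper: axioms (i) and (ii) are dispatched exactly as you do (via the Hom-Jordan twisting result and the cited lemma of \cite{NourouIssa}), and the paper's entire written proof is the same verification of axiom (iii) --- expand the three terms of $\{\a^2(x),\a^2(y),z\circ_\a t\}$ with the two forms \eqref{homleibniz}, \eqref{homleibniz'} of the Hom-Leibniz identity, cancel the four cross terms of shape $\{\,,\,\}\circ\{\,,\,\}$ in pairs by skewsymmetry, and regroup the surviving six summands into $\{x,y,z\}\circ_\a\a^2(t)+\a^2(z)\circ_\a\{x,y,t\}$. The only cosmetic difference is bookkeeping: the paper uses multiplicativity to factor a global $\a$ out of the whole expression at the start and computes inside it, whereas you carry the powers $\a^2,\a^3$ through the computation and invoke multiplicativity at the end; both arguments (and the proposition itself) tacitly assume the Hom-JMP algebra is multiplicative.
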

\begin{proof}
  Let $x,y,z,t \in A$, we have
  \begin{align*}
    \{\a^2(x),\a^2(y),z \circ_\a t \}= & 2\{\{\a^2(x),\a^2(y)\},\a(z \circ_\a t)\}-\{\{\a^2(y),z \circ_\a t\},\a^3(x)\}-\{\{z \circ_\a t,\a^2(x)\},\a^3(y)\}\\
     =&\a\Big( 2\{\{\a(x),\a(y)\},\a(z) \circ \a(t)\}-\{\{\a(y),z \circ t\},\a^2(x)\}-\{\{z \circ t,\a(x)\},\a^2(y)\}\Big) \\
    =&\a\Big( 2\{\{x,y\},\a(z)\} \circ \a^2(t)+2\a^2(z) \circ \{\{x,y\},\a(t)\}-
    \{\{y,z\} \circ \a(t),\a^2(x)\}\\&-\{\a(z) \circ \{y,t\},\a^2(x)\}
    -\{\a(z) \circ \{t,x\},\a^2(y)\}-\{\{z ,x\}\circ \a(t),\a^2(y)\}\Big)\\
    =&\a\Big( 2\{\{x,y\},\a(z)\} \circ \a^2(t)+2\a^2(z) \circ \{\{x,y\},\a(t)\}-
\{\a(y),\a(z)\} \circ \{\a(t),\a(x)\}\\&-\{\{y,z\} ,\a(x)\}\circ \a^2(t)-\{\a(z) ,\a(x)\}\circ \{\a(y),\a(t)\}-\a^2(z) \circ \{\{y,t\},\a(x)\}
    \\&-\{\a(z),\a(y)\} \circ \{\a(t),\a(x)\}-\a^2(z) \circ \{\{t,x\},\a(y)\}\\&-\{\a(z) ,\a(x)\}\circ \{ \a(t),\a(y)\}-\{\{z ,x\},\a(y)\}\circ \a^2(t)\Big)\\
    =&\a\Big(\a^2(z) \circ (\{\{x,y\},\a(t)\}-\{\{y,t\},\a(x)\}- \{\{t,x\},\a(y)\}) \\&
    +(2\{\{x,y\},\a(z)\} -\{\{y,z\} ,\a(x)\}-\{\{z ,x\},\a(y)\}\circ) \a^2(t)\Big)
    \\=&\a^2(z) \circ_\a \{x,y,t\}
    +\{x,y,z\}\circ_\a \a^2(t).
  \end{align*}
\end{proof}
Let $(A,\{,,\},\a)$ be a multiplicative   Hom-Lie triple system and $B: A \times A \to \mathbb{K}$  be a symmetric non-degenerate  bilinear form. We say that $B$ is invariant if $$B(L(x,y)(z),t)=-B(z,L(x,y)(t)),\ \forall x,y,z,t \in A,$$
where $L(x,y)(z)=\{x,y,z\}$.
 In this case $(A,\{,,\},\a,B)$ will be called a pseudo-Euclidian Hom-Lie triple system if, in addition:
\begin{align*}
    & B(\a(x),y)=B(x,\a(y)),\ \quad \forall x,y \in A.
\end{align*}
\begin{df}
Let $(A,\{,,\},\circ,\a)$ be a Hom-Lie-Jordan-Poisson triple system  and $B:A \times A \to \mathbb{K}$ be a bilinear form on $A$. We say that $(A,\{,,\},\circ, \a,B)$ is a pseudo-Euclidian Hom-Lie-Jordan-Poisson triple system if $(A,\{,,\},\a,B)$ is a pseudo-Euclidian Hom-Lie triple system and $(A,\circ,\a,B)$ is a pseudo-Euclidian Hom-Jordan algebra.
\end{df}

\begin{df}
A Hom-Lie-Jordan-Poisson triple system  $(A,\{,,\},\circ,\a)$ is called Hom-pseudo-Euclidian if there exists $(B,\gamma)$, where $B$ is a symmetric and non-degenerate bilinear form on $A$ and $\gamma: A \to A$ is a homomorphism such that:
\begin{align*}
B(\a(x),y)&=B(x,\a(y)),\\ B(L(x,y)(z),\gamma(t))&=-B(\gamma(z),L(x,y)(t)),\\  B(x \circ y,\gamma(z))&=B(\gamma(x), y \circ z),
\end{align*}
for all $x,y,z,t \in A$.
\end{df}

\begin{cor}
Let $(A,\{,\},\circ,\a,B)$ be a pseudo-Euclidian Hom-JMP algebra. Then the $6$-uplet  $(A,\{,,\},\circ_\a, \a^2,B,\a)$  is a  Hom-pseudo-Euclidian Hom-Lie-Jordan-Poisson  triple system, where $\{,,\}$ and $\circ_\a$ are defined in \eqref{malcev==>LTS} and \eqref{Jordan==>HomJordan}.
\end{cor}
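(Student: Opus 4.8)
The plan is to verify separately the two aspects of the assertion. That $(A,\{,,\},\circ_\a,\a^2)$ is a Hom-Lie-Jordan-Poisson triple system is exactly the statement of the previous Proposition applied to $(A,\{,\},\circ,\a)$, so that part requires nothing new. It remains to exhibit the pair $(B,\gamma)$ witnessing the Hom-pseudo-Euclidian property, and I claim that $\gamma=\a$ works. Thus I must check the three identities $B(\a^2(x),y)=B(x,\a^2(y))$, $B(L(x,y)(z),\a(t))=-B(\a(z),L(x,y)(t))$, and $B(x\circ_\a y,\a(z))=B(\a(x),y\circ_\a z)$, together with the homomorphism requirement on $\gamma=\a$.

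The first and third identities are immediate from the pseudo-Euclidian Hom-JMP data. Iterating the self-adjointness $B(\a(x),y)=B(x,\a(y))$ gives $B(\a^2(x),y)=B(\a(x),\a(y))=B(x,\a^2(y))$. For the third, unfolding $x\circ_\a y=\a(x)\circ\a(y)$ and using invariance of $B$ with respect to $\circ$ gives $B(\a(x)\circ\a(y),\a(z))=B(\a(x),\a(y)\circ\a(z))=B(\a(x),y\circ_\a z)$. The compatibility of $\a$ with $\circ_\a$ and with $\{,,\}$, i.e. that $\a$ is a homomorphism of the triple system, follows in the same way from the multiplicativity of $\a$.

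The substantive point is the middle identity, the $\a$-twisted skew-invariance of the ternary bracket. The key preliminary observation is that invariance $B(\{u,v\},w)=B(u,\{v,w\})$ together with the skewsymmetry of $\{,\}$ forces every operator $ad_u=\{u,-\}$ to be skew-adjoint for $B$, that is $B(\{u,v\},w)=-B(v,\{u,w\})$. I would then expand $L(x,y)(z)=\{x,y,z\}$ through its definition \eqref{malcev==>LTS} and apply this skew-adjointness termwise to bring $\a(t)$ inside; the same is done to $-B(\a(z),\{x,y,t\})$. The term originating from $2\{\{x,y\},\a(-)\}$ is common to both sides and cancels, leaving on each side two scalars which, after one more use of invariance, take the form $B(\{a,b\},\{\a(c),\a(d)\})$.

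The main obstacle is the final matching of these four terms, which is routine but sign-sensitive. Invoking multiplicativity $\{\a(c),\a(d)\}=\a(\{c,d\})$, each scalar becomes $B(\{a,b\},\a(\{c,d\}))$, and this pairing is symmetric under $(a,b)\leftrightarrow(c,d)$ --- since $\a$ is self-adjoint and $B$ is symmetric --- and skew in each slot. With this symmetry one of the two right-hand terms coincides directly with a left-hand term, while the other matches after two sign changes coming from the skewsymmetry of $\{,\}$; the two sides therefore agree. The only ingredient beyond the pseudo-Euclidian axioms is the multiplicativity of $\a$, which is available here because the triple-system construction already rests on a multiplicative underlying Hom-Malcev structure.
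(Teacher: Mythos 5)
Your proposal is correct and follows essentially the same route as the paper: the substantive content in both is the direct verification of the twisted invariance $B(\{x,y,z\},\a(t))=-B(\a(z),\{x,y,t\})$ by expanding the ternary bracket and moving terms with invariance, skewsymmetry of $\{,\}$, self-adjointness of $\a$, and multiplicativity (the paper's intermediate terms $B(\{\a(y),\a(z)\},\{x,t\})$ are exactly your pairing $\Phi$ with its pair-swap symmetry used implicitly). Your write-up additionally spells out the parts the paper leaves tacit (the $\a^2$-compatibility, the Jordan-product identity, and $\gamma=\a$ being a homomorphism), but the argument is the same.
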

\begin{proof}
  Let $x,y,z,t\in A$, we have
  \begin{align*}
    B(\{x,y,z\},\a(t)) &= B(2\{\{x,y\},\a(z)\}-\{\{y,z\},\a(x)\}-\{\{z,x\},\a(y)\},\a(t))\\
     &= B(2\{\{x,y\},\a(z)\},\a(t))-B(\{\{y,z\},\a(x)\},\a(t))-B(\{\{z,x\},\a(y)\},\a(t)) \\
     & = -B(\a(z),2\{\{x,y\},\a(t)\})-B(\{y,z\},\{\a(x),\a(t)\})-B(\{z,x\},\{\a(y),\a(t)\})\\
     & = -B(\a(z),2\{\{x,y\},\a(t)\})-B(\{\a(y),\a(z)\},\{x,t\})-B(\{\a(z),\a(x)\},\{y,t\})\\
     & = -B(\a(z),2\{\{x,y\},\a(t)\})-B(\a(z),\{\{x,t\},\a(y)\})-B(\a(z),\{\{t,y\},\a(x)\})\\
     & = -B(\a(z),2\{\{x,y\},\a(t)\}-\{\{x,t\},\a(y)\}-\{\{t,y\},\a(x)\})\\
     &=-B(\a(z),\{x,y,t\}),
  \end{align*}
  which end proof.
\end{proof}


\end{document}